\tikzset{snake it/.style={decorate, decoration=snake}}
\newcommand{\IPC}{\textsc{Isometric Path Cover}\xspace}
\newcommand{\dist}[2]{\mathsf{d}\left(#1,#2\right)}
\newcommand{\anticp}[2]{A_{#1}\left(#2\right)}
\newcommand{\ipcor}[2]{ipco\left(\overrightarrow{#2_{#1}}\right)}
\newcommand{\ipac}[1]{ipacc\left(#1\right)}
\newcommand{\ipco}[1]{ipco\left(#1\right)}
\newcommand{\ie}{\textit{i.e.\xspace}}
\newcommand{\coverP}[2]{S_{#1}\left(#2\right)}
	\newenvironment{subproof}[1][\proofname]{%
  \begin{proof}[#1]%
}{%
  \end{proof}%
}
\newcommand{\Pnote}[2]{P\left(#1,#2\right)}
\newcommand{\pathseta}[3]{\mathcal{P}_{\searrow}^{#1}\left(#2,#3\right)}
\newcommand{\pathseteq}[3]{\mathcal{P}_{\rightarrow}^{#1}\left(#2,#3\right)}
\newcommand{\pathsetd}[3]{\mathcal{P}_{\nearrow}^{#1}\left(#2,#3\right)}
\definecolor{dartmouthgreen}{rgb}{0.05, 0.5, 0.06}
\newtheorem{theorem}{Theorem}
\newtheorem{proposition}[theorem]{Proposition}
\newtheorem{lemma}[theorem]{Lemma}
\newtheorem{observation}[theorem]{Observation}
\newtheorem{definition}[theorem]{Definition}
\newtheorem{claim}{Claim}[theorem]
    \title{Isometric path complexity of graphs\footnote{A preliminary version of this paper appeared as~\cite{shortversion} in the proceedings of MFCS 2023.}}
\author{Dibyayan Chakraborty\footnote{School of Computing, University of Leeds, United Kingdom. \texttt{Corresponding Author. Email:D.chakraborty@leeds.ac.uk}} \and Jérémie Chalopin\footnote{Laboratoire d'Informatique et Systèmes, Aix-Marseille Université and CNRS, Faculté des Sciences de Luminy, F-13288 Marseille, Cedex 9, France. This author was financed by the ANR projects DISTANCIA (ANR-17-CE40-0015) and DUCAT (ANR-20-CE48-0006).}
\and Florent Foucaud\footnote{Université Clermont Auvergne, CNRS, Clermont Auvergne INP, Mines Saint-Etienne, LIMOS, 63000 Clermont-Ferrand, France. This author was partially financed by the ANR project GRALMECO (ANR-21-CE48-0004), the French government IDEX-ISITE initiative 16-IDEX-0001 (CAP 20-25), and the International Research Center ``Innovation Transportation and Production Systems'' of the I-SITE CAP 20-25.} \and Yann Vax\`{e}s\footnote{Laboratoire d'Informatique et Systèmes, Aix-Marseille Université and CNRS, Faculté des Sciences de Luminy, F-13288 Marseille, Cedex 9, France. This author was financed by the ANR project DISTANCIA (ANR-17-CE40-0015)} }
\begin{document}

\maketitle

\newcommand{\correction}[1]{\textcolor{red}{#1}}
\newcommand{\ff}[1]{\textcolor{blue}{#1}}

\begin{abstract}

A set $S$ of isometric paths of a graph $G$ is ``$v$-rooted'', where $v$ is a vertex of $G$, if $v$ is one of the endpoints of all the isometric paths in $S$. The \emph{isometric path complexity} of a graph $G$, denoted by $\ipco{G}$, is the minimum integer $k$ such that there exists a vertex $v\in V(G)$ satisfying the following property: the vertices of any {single} isometric path $P$ of $G$ can be covered by $k$ many $v$-rooted isometric paths.

First, we provide an $O(n^2 m)$-time algorithm to compute the isometric path complexity of a graph with $n$ vertices and $m$ edges. Then we show that the isometric path complexity remains bounded for graphs in three seemingly unrelated graph classes, namely, \emph{hyperbolic graphs}, \emph{(theta, prism, pyramid)-free graphs}, and \emph{outerstring graphs}. 
There is a direct algorithmic consequence of having small isometric path complexity. Specifically, we show that if the isometric path complexity of a graph $G$ is bounded by a constant, then there exists a polynomial-time constant-factor approximation algorithm for \IPC, whose objective is to cover all vertices of a graph with a minimum number of isometric paths. This applies to all the above graph classes. 

\medskip\noindent\textbf{Keywords: Shortest paths, Isometric path complexity, Hyperbolic graphs, Truemper Configurations, Outerstring graphs, Isometric Path Cover}
\end{abstract}

\section{Introduction}

Finding properties of graph classes that can be exploited to develop
efficient algorithms is a popular direction of research in graph
theory. It has motivated researchers to define varieties of graph
parameters, study their combinatorial properties, and develop
efficient algorithms based on them.
Identifying graph classes where these parameters are bounded is also
fundamental to establish their relevance. Some of them
are bounded for seemingly unrelated classes,
e.g. \emph{mim-width}~\cite{BELMONTE201354,JAFFKE2020153} or
\emph{twin-width}~\cite{twinwidth}, showing unexpected structural
similarities between these classes.
In this paper, we study the \emph{isometric path complexity} that is
initially motivated by an algorithmic application. We show that it
can be computed in polynomial time and that it is bounded on three
 graph classes studied in different research areas in graph theory.


\smallskip

\noindent{\textbf{The isometric path cover problem.}} {Recently, Chakraborty et al.~\cite{ChakrabortyD0FG22} introduced a parameter called \emph{isometric path antichain cover number} of a graph $G$, denoted as $\ipac{G}$ (see Definition~\ref{D:acWidth}) in the context of an algorithmic problem known as the \IPC.}

A path is \emph{isometric} if it is a shortest path between its endpoints\footnote{{Sometimes isometric paths are also referred to as geodesic paths in the literature.}}.  An \emph{isometric path cover} of a graph $G$ is a set of isometric paths such that each vertex of~$G$ belongs to at least one of the paths. The \emph{isometric path number} of $G$ is the smallest size of an isometric path cover of $G$. Given a graph $G$ and an integer $k$, the objective of \IPC is to decide if there exists an isometric path cover of cardinality at most $k$. 

\IPC has been {introduced} and studied in the context of pursuit-evasion games~\cite{cop-decs,AF84}. However, until recently the algorithmic aspects of \IPC remained unexplored. After proving that \IPC remains NP-hard on \emph{chordal graphs} (graphs without any induced cycle of length at least 4), Chakraborty et al.~\cite{ChakrabortyD0FG22} provided constant-factor approximation algorithms for many graph classes, including \emph{interval graphs}, chordal graphs, and more generally, graphs with bounded \emph{treelength}. {The authors proved the bound on the approximation ratio by showing that the isometric path antichain cover number of the above graph classes is bounded by a constant.} Specifically, they proved that $(i)$ when $\ipac{G}$ is bounded by a constant, \IPC admits a constant-factor approximation algorithm on $G$; and $(ii)$ the isometric path antichain cover number of graphs with bounded \emph{treelength} is bounded. 

\smallskip
{\noindent\textbf{Our objectives.}} The objective of this paper is three-fold: \textbf{(A)} provide a more intuitive definition of isometric path antichain cover number; \textbf{(B)} provide a polynomial-time algorithm to compute $\ipac{G}$; and \textbf{(C)} prove that it remains bounded for seemingly unrelated graph classes.  Along the way, we also extend the horizon of approximability of \IPC. To achieve \textbf{(A)} we introduce the following new metric graph parameter, that we will show to be always equal to the isometric path antichain cover number, and whose definition is simpler. 

\begin{definition}\label{D:ipco}
    Given a graph $G$ and a vertex $v$ of 
    $G$, a set $S$ of isometric paths of $G$ is \emph{$v$-rooted} if $v$ is one of the endpoints of all the isometric paths in $S$. The \emph{isometric path complexity} of a graph $G$, denoted by $\ipco{G}$, is the minimum integer $k$ such that there exists a vertex $v\in V(G)$ satisfying the following property: the vertices of any isometric path $P$ of $G$ can be covered by $k$ many $v$-rooted isometric paths.
\end{definition}

A consequence of Dilworth's theorem~\cite{D50} is that for any graph $G$, $\ipac{G} = \ipco{G}$ (see Lemma~\ref{lem:ipac-ipco}). We will give a polynomial-time algorithm to compute $\ipco{G}$, and therefore $\ipac{G}$ for an arbitrary undirected graph $G$. This achieves \textbf{(B)}.

Finally, to achieve \textbf{(C)}, we consider the following three seemingly unrelated graph classes, namely, \emph{$\delta$-hyperbolic graphs}, \emph{(theta, prism, pyramid)-free graphs} and \emph{outerstring} graphs, and show that their isometric path complexity is bounded by a constant.

\begin{figure}[t]
\centering
\scalebox{0.6}{\begin{tikzpicture}[node distance=7mm]

\tikzstyle{mybox}=[fill=white,line width=0.5mm,rectangle, minimum height=.8cm,fill=white!70,rounded corners=1mm,draw];
\tikzstyle{myedge}=[line width=0.5mm]
\newcommand{\tworows}[2]{\begin{tabular}{c}{#1}\\{#2}\end{tabular}}

     \node[mybox] (ipacc)  {\begin{tabular}{c}
          Bounded isometric path  \\
          complexity
     \end{tabular}};
     
     \node[mybox] (hyp) [below left=of ipacc,xshift=-1cm] {bounded hyperbolicity \textbf{*}} edge[myedge] (ipacc);

     \node[mybox] (theta) [below right=of ipacc,xshift=1cm] {\begin{tabular}{c}
          ($t$-theta, $t$-prism, $t$-pyramid)-free \textbf{*}
     \end{tabular}}  edge[myedge] (ipacc);
     
    \node[mybox] (outer) [below left=of theta,xshift=-0.5cm, yshift=-0.15cm] {\begin{tabular}{c}
          Outerstring \textbf{*} \\
          
     \end{tabular}} edge[myedge] (theta) ;
     
     \node[mybox, fill=gray!20] (circle) [below left=of outer,xshift=-0.2cm, yshift=-0.35cm] {\begin{tabular}{c}
          circle \textbf{*}
    \end{tabular}}  edge[myedge] (outer);

    \node[mybox] (truemper) [below right=of theta,xshift=0.5cm] {\begin{tabular}{c}
          (theta,prism,pyramid)-  \\
          free \textbf{*}
    \end{tabular}} edge[myedge] (theta) ;

     \node[mybox] (univ) [below =of truemper] {\begin{tabular}{c}
          Universally signable \textbf{*}
    \end{tabular}} edge[myedge] (truemper);
    
     \node[mybox] (treelength) [below=of hyp,] {bounded tree-length} edge[myedge] (hyp);
     
     \node[mybox] (chordality) [below=of treelength, yshift=-0.4cm] {bounded chordality} edge[myedge] (treelength);
  
     \node[mybox] (bdiameter) [below=of treelength, xshift=-4cm, yshift=-0.4cm] {bounded diameter} edge[myedge] (treelength);

    \node[mybox] (chordal) [below=of chordality, yshift=-0.5cm] {chordal} edge[myedge] (chordality); \draw[myedge] (chordal.east) -| (outer.south) ; \draw[myedge] (chordal.east) -| (univ.south) ;
     
    \node[mybox, fill=gray!20] (atfree) [below left=of chordality,xshift=-0.5cm, yshift=-0.5cm] {AT-free} edge[myedge] (chordality);
     
    \node[mybox, fill=gray!20] (interval) [below =of chordality,yshift=-2cm] {Interval} edge[myedge] (chordal) edge[myedge] (atfree);

     \node[mybox, fill=gray!20] (arc) [below =of theta, yshift=-1.75cm, xshift=0.11cm] {\begin{tabular}{c}
          circular arc \textbf{*}
    \end{tabular} } edge[myedge] (outer) edge[myedge] (truemper);
    
    \draw[myedge] (interval.east) -- ++ (11.8,0) -- ++(0,1.25) -- ++ (0.5,0) -- ++ (0,0.5) -- ++ (-0.5,0) -- (arc.south);
    
    \node[mybox, fill=gray!20] (perm) [below =of interval] {Permutation}; \draw[myedge] (perm.west) -| (atfree.south); \draw[myedge] (perm.east) -- ++ (2.9,0) -- ++ (0, 1.3) -- ++ (0.5,0) -- ++ (0,0.5) -- ++ (-0.5,0) -- ++ (0,1) -- ++ (0.5,0) -- ++ (0,0.5) -- ++ (-0.5,0) -- (circle.south);
    
  \end{tikzpicture}}

\caption{Inclusion diagram for graph classes.
If a class $A$ has an upward path to class $B$, then $A$ is included in $B$. Constant bounds for the isometric path complexity
on graph classes marked with \textbf{*} are contributions of this paper.}
\label{fig:diagram}
\end{figure}

 \smallskip
 \noindent
\textbf{$\delta$-hyperbolic graphs:} A graph $G$ is said to be \emph{$\delta$-hyperbolic}~\cite{gromov1987} if for any four vertices $u,v,x,y$, the two larger of the three distance sums $\dist{u}{v}+\dist{x}{y}$, $\dist{u}{x}+\dist{v}{y}$ and $\dist{u}{y}+\dist{v}{x}$ differ by at most $2\delta$. A graph class $\mathcal{G}$ is \emph{hyperbolic} if there exists a constant $\delta$ such that every graph $G\in \mathcal{G}$ is $\delta$-hyperbolic. This parameter comes from geometric group theory and  was first introduced by Gromov~\cite{gromov1987} in order to study groups via their \emph{Cayley graphs}. The hyperbolicity of a tree is $0$, and in general, the hyperbolicity measures how much the distance function of a graph deviates from a tree metric.   Many structurally defined graph classes like chordal graphs, \emph{cocomparability} graphs~\cite{corneil2013ldfs}, \emph{asteroidal-triple free} graphs~\cite{corneil1997asteroidal}, graphs with bounded \emph{chordality} or \emph{treelength} are hyperbolic~\cite{chepoi2008diameters,kosowski2015k}. Moreover, hyperbolicity has been found to capture important properties of several large practical graphs such as the Internet graph~\cite{shavitt2004curvature} or database relation graphs~\cite{walter2002interactive}. Due to its importance in discrete mathematics, algorithms, \emph{metric graph theory}, researchers have studied various algorithmic aspects of hyperbolic graphs~\cite{chepoi2008diameters,coudert2021enumeration,chepoi2017core,das2018effect}. Note that graphs with diameter~2 are hyperbolic, which may contain any graph as an induced subgraph.

 \smallskip
\noindent
\textbf{(theta, prism, pyramid)-free graphs:} A \emph{theta} is a graph made of three vertex-disjoint induced paths $P_1 = a\ldots b$, $P_2 = a\ldots b$, $P_3 = a\ldots b$ of lengths at least~2, and such that no edges exist between the paths except the three edges incident to $a$ and the three edges incident to $b$. 
A \emph{pyramid} is a graph made of three induced paths $P_1 = a\ldots b_1$, $P_2 = a\ldots b_2$, $P_3 = a\ldots b_3$, two of which have lengths at least $2$, vertex-disjoint except at $a$, and such that $b_1 b_2 b_3$ is a triangle and no edges exist between the paths except those of the triangle and the three edges incident to $a$. A \emph{prism} is a graph made of three vertex-disjoint induced paths $P_1 = a_1\ldots b_1$, $P_2 = a_2\ldots b_2$, $P_3 = a_3\ldots b_3$ of lengths at least $1$, such that $a_1 a_2 a_3$ and $b_1 b_2b_3$ are triangles and no edges exist between the paths except those of the two triangles. A graph $G$ is \emph{(theta, pyramid, prism)}-free if $G$ does not contain any induced subgraph isomorphic to a theta, pyramid or prism. A graph is a \emph{$3$-path configuration} if it is a theta, pyramid or prism. The study of $3$-path configurations dates back to the works of Watkins and Meisner~\cite{watkins1967cycles} in 1967 and plays ``special roles'' in the proof of the celebrated \emph{Strong Perfect Graph Theorem}~\cite{chudnovsky2006strong}. Important graph classes like chordal graphs, \emph{circular arc} graphs, \emph{universally-signable} graphs~\cite{conforti1997universally} exclude all $3$-path configurations. Popular graph classes like \emph{perfect} graphs, \emph{even hole}-free graphs exclude some {(but not all)} of the $3$-path configurations. Note that, (theta, prism, pyramid)-free graphs are not hyperbolic. To see this, consider a cycle $C$ of order $n$. Clearly, $C$ excludes all $3$-path configurations and has hyperbolicity $\Omega(n)$.

 \smallskip
 \noindent
\textbf{Outerstring graphs:} A set $S$ of simple curves on the plane is \emph{grounded} if there exists a horizontal line containing one endpoint of each of the curves in $S$. A graph $G$ is an \emph{outerstring} graph if there is a collection $C$ of grounded simple curves and a bijection between $V(G)$ and $C$ such that two curves in $S$ intersect if and only if the corresponding vertices are adjacent in $G$. 
The term ``outerstring graph'' was first used in the early 90's~\cite{kratochvil1991string} in the context of studying intersection graphs of simple curves on the plane. Many well-known graph classes like chordal graphs, \emph{circular arc} graphs~\cite{francis2014forbidden}, \emph{circle} graphs (intersection graphs of chords of a circle~\cite{davies2021circle}), and cocomparability graphs~\cite{corneil2013ldfs} are also outerstring graphs and thus, motivated researchers from the \emph{geometric graph theory} and \emph{computational geometry} communities to study algorithmic and structural aspects of outerstring graphs and its subclasses~\cite{biedl2018size,bose2022computing,cardinal2017intersection,keil2017algorithm,rok2019outerstring}. Note that in general, outerstring graphs may contain a prism, pyramid or theta as an induced subgraph. Moreover, cycles of arbitrary order are outerstring graphs, implying that outerstring graphs are not hyperbolic.

\smallskip

 It is clear from the above discussion that the classes of hyperbolic graphs, (theta, prism, pyramid)-free graphs, and outerstring graphs are {pairwise} incomparable (with respect to the containment relationship). We show that the isometric path complexities of all the above graph classes are small.

\subsection*{Our contributions}

The main technical contribution of this paper are as follows. First we prove that the isometric path complexity can be computed in polynomial time.

\begin{theorem}
    \label{thm:ipcoInP}
    Given a graph $G$ with $n$ vertices and $m$ edges, it is possible to compute $\ipco{G}$ in $O(n^2m)$ time.
\end{theorem}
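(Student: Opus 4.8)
The plan is to reduce the computation to $n$ independent ``per-root'' computations and make each of them run in $O(nm)$ time. Recall from Definition~\ref{D:ipco} that $\ipco{G}=\min_{v}\max_{P} c_v(P)$, where $c_v(P)$ is the minimum number of $v$-rooted isometric paths needed to cover the vertices of an isometric path $P$. Fix $v$ and consider the partial order $\preceq_v$ on $V(G)$ defined by $x\preceq_v y$ iff $x$ lies on a shortest $v$--$y$ path, i.e.\ $\dist{v}{x}+\dist{x}{y}=\dist{v}{y}$; transitivity follows from the triangle inequality. A set of vertices is coverable by a single $v$-rooted isometric path exactly when it is a chain of $\preceq_v$ (the vertices on one $v$-geodesic are pairwise comparable, and conversely a chain concatenates into a single $v$-geodesic). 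Hence $c_v(P)$ is the minimum number of chains covering $(V(P),\preceq_v)$, which by Dilworth's theorem (this is the content behind Lemma~\ref{lem:ipac-ipco}) equals the maximum size $\alpha_v(P)$ of an antichain of $\preceq_v$ inside $V(P)$. So I am left to compute $f(v):=\max_P \alpha_v(P)$ for each $v$, and output $\min_v f(v)$.

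The key structural step, which I expect to be the crux, is to linearize the antichain condition. For a fixed $v$ and an isometric path $P=p_0\ldots p_\ell$, write $d_i=\dist{v}{p_i}$ and set $u_i=d_i-i$ and $w_i=d_i+i$. Since consecutive $d_i$ differ by at most $1$, the sequence $u_i$ is non-increasing and $w_i$ is non-decreasing along $P$. As $|d_i-d_j|\le|i-j|$ always holds, two vertices $p_i,p_j$ are $\preceq_v$-comparable iff $|d_i-d_j|=|i-j|$, i.e.\ iff $u_i=u_j$ or $w_i=w_j$. Therefore a subset of $V(P)$ is an antichain iff both $u$ and $w$ are injective on it, and by the monotonicity of $u$ and $w$ this is equivalent to requiring only that \emph{consecutive} chosen vertices (in the order along $P$) be $\preceq_v$-incomparable. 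Thus $f(v)$ equals the maximum length of a sequence of vertices lying in order on a common isometric path such that every two consecutive ones are $\preceq_v$-incomparable.

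To compute $f(v)$ I would anchor each candidate sequence at its first vertex $a$: a sequence lies in order on a common isometric path with first vertex $a$ if and only if it is a chain of $\preceq_a$, equivalently a directed path in the BFS-DAG $D_a$ (arcs $xy$ with $\dist{a}{y}=\dist{a}{x}+1$). This is precisely what lets the global ``lies on a single geodesic'' constraint be enforced by following one DAG, rather than checked pair-by-pair. For each ordered pair $(v,a)$ I run a dynamic program over $D_a$ in order of increasing distance from $a$ (a topological order). Between two consecutive \emph{selected} vertices, incomparability of the endpoints translates, along the intervening sub-geodesic, into the condition that the sequence of steps---classified as \emph{up}, \emph{flat}, or \emph{down} according to whether $\dist{v}{\cdot}$ increases, stays equal, or decreases---is neither entirely up nor entirely down. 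I therefore keep $O(1)$ states per vertex recording, for the block of steps since the last selected vertex, whether it is so far all-up, all-down, or already mixed, together with the number of vertices selected; each arc is relaxed in $O(1)$, selecting the current vertex and opening a new block whenever its current block is mixed. The best value over all vertices gives $f(v)$ restricted to geodesics starting at $a$, and $f(v)=\max_a(\text{this value})$.

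For the complexity, each pair $(v,a)$ needs one BFS from $a$ (for $\dist{a}{\cdot}$ and $D_a$), the single BFS from $v$ (reused across all $a$) for $\dist{v}{\cdot}$, and one $O(m)$ pass of the dynamic program, hence $O(m)$ per pair; summing over the $n$ sources and $n$ roots yields the claimed $O(n^2m)$ bound. I expect the difficulty to lie entirely in the structural reduction of the second paragraph---establishing that antichains on a geodesic are captured by consecutive incomparability, and that the single-geodesic requirement is faithfully encoded by directed paths of $D_a$ so that a constant number of DP states suffices---after which verifying that the program realizes a maximum antichain is routine.
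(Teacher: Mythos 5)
Your proposal is correct, and while it shares the paper's overall architecture --- iterate over all $n^2$ (root, source) pairs and run an $O(m)$ dynamic program on the BFS DAG of the source, for $O(n^2m)$ in total --- the dynamic program itself takes a genuinely different (dual) route. The paper computes the covering number $\gamma^r(x,v)$, i.e.\ the maximum over isometric $(x,v)$-paths of the minimum number of $r$-rooted isometric paths covering them, directly via the recursions of Lemmas~\ref{lem:gamma-rightarrow}--\ref{lem:gamma-nearrow}: these distinguish the direction of the last edge of the path relative to the root ($\gamma_{\searrow}$, $\gamma_{\rightarrow}$, $\gamma_{\nearrow}$), require the auxiliary quantities $\beta_{\searrow},\beta_{\nearrow}$ for covers of $P-\{v\}$, and are justified by exchange arguments resting on Lemma~\ref{lemma1}. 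You instead pass to maximum antichains via Dilworth (which is exactly Lemma~\ref{lem:ipac-ipco}, as you note) and replace the exchange arguments by your linearization lemma: along an isometric path the potentials $\dist{v}{p_i}-i$ and $\dist{v}{p_i}+i$ are monotone, so a subset is an antichain if and only if consecutive selected vertices are incomparable, i.e.\ each intervening block of steps is neither all-up nor all-down. This localization is what makes an $O(1)$ Markovian block state suffice, and your states (all-up / all-down / mixed) are visibly dual to the paper's trichotomy of path classes; likewise, your chain-of-$\preceq_a$ / directed-path-in-$D_a$ equivalence is the same source-anchoring the paper effects through the condition $u\in I(x,v)\cap N(v)$ and the layers $X_i$. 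Your route concentrates the combinatorial difficulty in one clean, easily verified lemma, after which correctness of the DAG DP is near-immediate (to be fully rigorous, either allow a select/skip choice at each vertex, or justify the greedy ``select as soon as mixed'' rule by the monotonicity of mixedness under enlarging a block --- a routine exchange, as you say); the paper's formulation instead computes the cover numbers themselves, at the cost of a three-case analysis for each recursion. Both arguments are sound and both deliver the stated $O(n^2m)$ bound.
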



Then we show that the isometric path complexity remains bounded on hyperbolic graphs, (theta, pyramid, prism)-free graphs, and outerstring graphs. Specifically, we prove the following theorem.

\begin{theorem}\label{thm:main}
Let $G$ be a graph.
\vspace{-5pt}
    \begin{enumerate}[label=(\alph*)]
        \item \label{thm:hyperbolicity} If the hyperbolicity of $G$ is at most $\delta$, then $\ipco{G} \leq  4\delta+3$.
        \item\label{thm:truemper} If $G$ is a (theta, pyramid, prism)-free graph, then $\ipco{G} \leq 71$.
        \item\label{thm:outer} If $G$ is an outerstring graph, then $\ipco{G} \leq 95$.
    \end{enumerate}
\end{theorem}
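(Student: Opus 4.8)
The plan is to work throughout with the identity $\ipco{G}=\ipac{G}$ from Lemma~\ref{lem:ipac-ipco}, reducing each bound to exhibiting a single root vertex $v$ for which no isometric path carries a large ``antichain''. Fix a root $v$ and an isometric path $P=x_0x_1\cdots x_\ell$. Two vertices $x_i,x_j$ with $i<j$ lie on a common $v$-rooted isometric path (are \emph{comparable}) exactly when $|\dist{v}{x_i}-\dist{v}{x_j}|=j-i$; since the inequality $|\dist{v}{x_i}-\dist{v}{x_j}|\le j-i$ always holds, they are incomparable precisely when $|\dist{v}{x_i}-\dist{v}{x_j}|\le j-i-1$. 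By Lemma~\ref{lem:ipac-ipco} it then suffices to bound, for a suitable $v$, the largest set of pairwise incomparable vertices of any $P$. Writing $u(i)=i-\dist{v}{x_i}$ and $w(i)=i+\dist{v}{x_i}$, a one-line computation shows that along any antichain $x_{i_1},\dots,x_{i_t}$ (with $i_1<\cdots<i_t$) both $u$ and $w$ are strictly increasing. Hence it is enough to control the total variation of $u$ (or $w$) over the antichain, i.e. to bound how far $\dist{v}{\cdot}$ can drift from the ``slope $\pm 1$'' behaviour of a genuine $v$-geodesic.

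For part~\ref{thm:hyperbolicity} I would take $v$ arbitrary and use slimness of geodesic triangles. Consider geodesics $[v,x_0]$ and $[v,x_\ell]$; together with $P$ they bound a triangle, which in a $\delta$-hyperbolic graph is $O(\delta)$-slim, so every $x_i$ lies within $O(\delta)$ of a point $y_i$ on $[v,x_0]\cup[v,x_\ell]$. If $y_i\in[v,x_0]$ then $\dist{v}{x_i}=\dist{v}{x_0}-i$ up to an additive $O(\delta)$ error, so $w(i)$ is constant up to $O(\delta)$; symmetrically, if $y_i\in[v,x_\ell]$ then $u(i)$ is constant up to $O(\delta)$. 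Splitting the antichain according to which side each vertex is near, the strictly increasing integer sequence $w$ (resp.\ $u$) then ranges over an interval of length $O(\delta)$, bounding each part; tracking the slimness constant carefully is what yields the precise $\ipco{G}\le 4\delta+3$.

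For parts~\ref{thm:truemper} and~\ref{thm:outer} the same reduction applies, but the drift of $\dist{v}{\cdot}$ must now be controlled from structural rather than metric hypotheses. The common mechanism I would exploit is this: for two incomparable antichain vertices $x_i,x_j$, the $P$-segment between them together with shortest $v$-paths to $x_i$ and $x_j$ closes into a cycle whose attachments to the two $v$-geodesics are constrained. In a (theta, prism, pyramid)-free graph such configurations cannot branch freely: excluding all three $3$-path configurations forces the shortest-path structure from $v$ in a neighbourhood of $P$ to behave essentially like a tree with few branch points, capping the number of pairwise incomparable vertices. For outerstring graphs I would instead fix $v$ to be realized by an extremal (say leftmost) grounded curve and use planarity of the grounded realization: the curves of the $P$-segment and of the two $v$-geodesics form a grounded planar diagram, and a large antichain would force a crossing/nesting pattern that the grounding line forbids, after routing through a circle-graph-type argument. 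In both cases one obtains an absolute bound, with the explicit constants $71$ and $95$ emerging from the several layers of this case analysis.

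The main obstacle is squarely in parts~\ref{thm:truemper} and~\ref{thm:outer}: the hyperbolic bound is essentially forced by the slim-triangle calculus once the antichain reduction is in place, whereas ruling out large antichains purely from the exclusion of theta/prism/pyramid, or from a grounded planar realization, requires genuinely class-specific structural lemmas together with the bookkeeping needed to pin down the constants. I expect the (theta, prism, pyramid)-free case to be the most delicate, since the only available leverage is the induced-subgraph obstruction, with no metric or geometric crutch to fall back on.
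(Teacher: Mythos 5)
Your opening reduction is sound and matches the paper's framework: via Lemma~\ref{lem:ipac-ipco} it suffices to bound antichains of $\overrightarrow{G_r}$ along isometric paths, and your monotone sequences $u(i)=i-\dist{v}{x_i}$, $w(i)=i+\dist{v}{x_i}$ are exactly the content of Proposition~\ref{prp:antichain-length}. For part~(a), however, the paper does not pass through slim triangles: it applies the four-point (Gromov product) condition directly to three well-chosen antichain vertices $a_1,a_{2\delta+2},a_{4\delta+4}$ and contradicts Proposition~\ref{prp:antichain-length} on the subpath between two of them. Your slimness route is plausible in spirit, but converting four-point $\delta$-hyperbolicity into slimness of geodesic triangles loses a multiplicative constant (triangles are only $O(\delta)$-slim, roughly $4\delta$, under this definition), so ``tracking the slimness constant carefully'' will yield a weaker linear bound, not $4\delta+3$. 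That is a quantitative gap even in the part you consider easy.

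The genuine gap is in parts~(b) and~(c), where your proposal contains no workable mechanism. The paper's proof of (b) is a two-step extraction: Lemma~\ref{lem:ipac-fat} shows that $\ipac{G}\geq 8t+64$ forces a concrete induced structure, a $(t+1)$-fat turtle (a cycle $Z$, built from a subpath $\Pnote{z}{w}$ of $P$ and pieces of two root-geodesics, together with an induced path of length at least $t+1$ attached to $Z$ at two parts far apart in $Z$), assembled via repeated ``last-neighbour'' truncations controlled by Observation~\ref{obs:anti-cp-reduc}; Lemma~\ref{lem:fat-turtle} then shows a $(t+1)$-fat turtle contains a $t$-theta, $t$-prism or $t$-pyramid. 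Your claim that excluding the three configurations makes the shortest-path structure from $v$ near $P$ ``essentially a tree with few branch points'' is unsubstantiated and dubious as stated: wheels are \emph{not} excluded, and Theorem~\ref{thm:lower} exhibits wheel-free graphs avoiding two of the three configurations with unbounded isometric path complexity, so any correct argument must exploit all three exclusions simultaneously — which your sketch never does; the constant $71$ is $8\cdot 1+63$ from the explicit turtle extraction, not generic bookkeeping. For (c), the paper makes no geometric crossing/nesting argument at all: Lemma~\ref{lem:outer-truemper} proves outerstring graphs are ($4$-theta, $4$-prism, $4$-pyramid)-free using string-graph machinery (closure under edge contraction, the apex characterization that $G$ is outerstring iff $G^+$ is a string graph, and the absence of induced full subdivisions of $K_{3,3}$ in string graphs), and then the $t$-parameterized version of (b) with $t=4$ gives $95=8\cdot 4+63$. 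Your direct grounded-diagram argument would in effect have to reprove this reduction, and as sketched it provides no route to any absolute constant.
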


To the best of our knowledge, the isometric path complexity being bounded (by constant(s)) is the only known non-trivial property shared by any two or all three of these graph classes. Theorem~\ref{thm:main} shows that isometric path complexity (equivalently isometric path antichain cover number), as recently introduced graph parameters, are general enough to unite these three graph classes by their metric properties. We hope that this definition will be useful for the field of metric graph theory, for example by enabling us to study (theta,prism,pyramid)-free graphs and outerstring graphs from the perspective of metric graph theory. 

We provide a unified proof for Theorem~\ref{thm:main}\ref{thm:truemper} and~\ref{thm:main}\ref{thm:outer} by proving that the isometric path complexity of \emph{($t$-theta, $t$-pyramid, $t$-prism)}-free graphs~\cite{trotignonprivate} (see Section~\ref{sec:main} for a definition) is bounded by a linear function of $t$. Due to the above theorems, we also have as corollaries that there is a {polynomial-time} approximation algorithm for \IPC with approximation ratio
\begin{enumerate*}[label=(\alph*)]
    \item $4\delta+3$ on $\delta$-hyperbolic graphs,
    \item $73$ on (theta, prism, pyramid)-free graphs,
    \item $95$ on outerstring graphs, and
    \item $8t+63$ on ($t$-theta, $t$-pyramid, $t$-prism)-free graphs.
\end{enumerate*}

To contrast with Theorem~\ref{thm:main}, we construct highly structured graphs with small \emph{tree-width} and large isometric path complexity. A \emph{wheel} consists of an induced cycle $C$ of order at least $4$ and a vertex $w \notin V(C)$ adjacent to at least three vertices of $C$. The 3-path configurations introduced earlier and the wheel together are called \emph{Truemper configurations}~\cite{vuskovic2013world} and they are important objects of study in structural and algorithmic graph theory~\cite{aboulker2015wheel,diot2020theta}.

\begin{theorem}\label{thm:lower}
For every $k\geq 1$, 
\vspace{-7.5pt}
\begin{enumerate}[label=(\alph*)]
    \item\label{it:a} there exists a (pyramid, prism, wheel)-free graph $G$ with tree-width $2$, hyperbolicity at least $\lceil\frac{k}{2}\rceil-1$ and $\ipco{G}\geq k$;
    \item\label{it:b} there exists a (theta, prism, wheel)-free planar graph $G$ with tree-width at most $3$, hyperbolicity at least $\lceil\frac{k}{2}\rceil-1$ and $\ipco{G}\geq k$;
    \item\label{it:c} there exists a (theta, pyramid, wheel)-free planar graph $G$ with hyperbolicity at least $\lceil\frac{k}{2}\rceil-1$ and $\ipco{G}\geq k$;
    \item \label{it:d} { there exists a (prism, pyramid, wheel)-free planar bipartite graph $G$ such that $|V(G)|$ is $O(k^2)$, $G$ has an isometric path cover of size $3k+1$ and any $v$-rooted isometric path cover of $G$ has cardinality at least $k^2$ for any $v\in V(G)$. }
\end{enumerate}
\end{theorem}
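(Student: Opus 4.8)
The plan is to handle all four parts through one reformulation and then to tune a separate explicit family $G_k$ for each. Fix a root $v$ and an isometric path $P=x_0x_1\cdots x_\ell$. Along $P$ the value $\dist{v}{x_i}$ changes by at most $1$ at each step, and two vertices $x_i,x_j$ with $i<j$ lie on a common $v$-rooted isometric path if and only if $|\dist{v}{x_i}-\dist{v}{x_j}|=j-i$ (equality in the triangle inequality, using $\dist{x_i}{x_j}=j-i$). Hence, exactly as in Lemma~\ref{lem:ipac-ipco}, Dilworth's theorem tells us that the minimum number of $v$-rooted isometric paths needed to cover $P$ equals the size of the largest antichain of this order, i.e.\ the largest set of indices $i_1<\cdots<i_m$ with $|\dist{v}{x_{i_a}}-\dist{v}{x_{i_b}}|<i_b-i_a$ for all $a<b$. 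So to prove $\ipco{G_k}\ge k$ it suffices to show that for \emph{every} $v$ some isometric path has a distance profile $i\mapsto\dist{v}{x_i}$ far enough from monotone to contain $k$ pairwise-incomparable vertices; and for part~(d) the rooted lower bound reduces to producing, for every $v$, an antichain of size $k^2$ in the $v$-geodesic order on all of $V(G_k)$.

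The building block of each family is essentially forced. By Theorem~\ref{thm:main}\ref{thm:truemper} every (theta, pyramid, prism)-free graph has isometric path complexity at most $71$; therefore, to make $\ipco{G_k}$ unbounded while staying in the prescribed class, each $G_k$ must contain the one configuration it is permitted: thetas for (a) and (d), pyramids for (b), prisms for (c). Accordingly I would take $\Theta(k)$ subdivided copies of the permitted gadget and chain them along a spine so as to create a long isometric path $P$ carrying $\Theta(k)$ evenly spaced ``shortcuts''; each active shortcut forces a local minimum of $\dist{v}{\cdot}$ on $P$, so that whatever the position of $v$ the profile alternates $\Omega(k)$ times and yields the desired antichain. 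Two internally disjoint branches of the chain form a long isometric cycle, and evaluating the four-point condition on four roughly equally spaced vertices of that cycle gives hyperbolicity at least $\lceil k/2\rceil-1$. The series-parallel shape of the chain gives tree-width $2$ in (a) and (with one extra apex-type branch) at most $3$ in (b); planarity in (b),(c),(d) is exhibited by the obvious drawing; and subdividing every edge to even length makes $G_k$ bipartite in (d).

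The structural checks are then a bounded case analysis. Wherever prisms and pyramids are forbidden I would keep $G_k$ triangle-free, which excludes both at once; the only vertices of degree at least $3$ are the poles of the gadgets, so to exclude wheels it is enough to verify that no pole is adjacent to three vertices of an induced cycle, and to exclude thetas (in (b),(c)) that no two vertices are joined by three internally disjoint induced paths. These are local properties of a single gadget together with its two neighbours on the spine.

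I expect the two lower bounds, rather than the structural verification, to be the real work. For $\ipco{G_k}\ge k$ the delicate point is the universal quantifier over $v$: a single generalized theta already has large hyperbolicity yet isometric path complexity at most $2$, precisely because one may root at a pole and make every distance profile unimodal, so the chaining must be designed so that no vertex is a good centre --- for each possible location of $v$ (inside a branch, at a pole, or off the spine) one must determine which shortcuts stay ``active'' and confirm that $\Omega(k)$ of them still bend the profile. For part~(d) there is the additional tension of the quadratic separation: the same $\Theta(k^2)$ vertices must be coverable by only $3k+1$ \emph{unrooted} isometric paths (say $\Theta(k)$ mutually transverse families of long geodesics) while, from every single root, the $v$-geodesic order stays ``flat'' enough to force an antichain of size $\Theta(k^2)$. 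Reconciling ``globally coverable by linearly many paths'' with ``rooted antichain of near-quadratic size'' is the crux of (d).
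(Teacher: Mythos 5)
Your reduction to antichains in the $v$-geodesic order is sound and matches the paper's Lemma~\ref{lem:ipac-ipco} (your criterion for two vertices of $P$ to lie on a common $v$-rooted isometric path is exactly Lemma~\ref{lemma1}), and you correctly identify which Truemper configuration each part must contain. But from there the proposal is a plan rather than a proof, and the plan's central mechanism is doubtful. You propose chaining $\Theta(k)$ gadgets along a spine and assert that ``whatever the position of $v$ the profile alternates $\Omega(k)$ times''; this is precisely the step you yourself defer (``one must determine which shortcuts stay active''), and for series chains it tends to \emph{fail}: rooting $v$ at an end of the spine typically makes the distance profile of every isometric path near-monotone (for instance, a chain of even cycles glued at antipodal cut vertices has strictly monotone profiles from an end root), so each block contributes only $O(1)$ to any antichain and no bound of order $k$ follows. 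The paper sidesteps this with a different gadget shape and a different antichain mechanism: $X_k$ (and its variants $Y_k$, $Z_k$, $W_k$) is a \emph{fan} --- $k+1$ paths of length $k$ emanating from an apex $a$, with the far endpoints $b_1,\dots,b_{k+1}$ linked along a path --- and $G$ is \emph{two} copies glued at the apex. Then for any root $r$, the copy not containing $r$ has all its distances to $r$ routed through the cut vertex $a$, so the isometric path $b_1\cdots b_k$ of that copy is entirely flat (all vertices equidistant from $r$) and is itself an antichain of size $k$; the universal quantifier over $v$ is dispatched with no case analysis, and no oscillating-profile argument is needed.

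For part (d) you only articulate the tension (``the crux of (d)'') without resolving it, and this is the second genuine gap. The paper's $W_k$ resolves it concretely: replace each bottom edge $b_ib_{i+1}$ of $X_k$ by $k$ internally disjoint paths of length $2$, creating $k^2$ vertices at maximum distance from the apex, of which any single $a$-rooted isometric path covers at most one; gluing two copies at $a$ (a cut vertex) extends the $k^2$ lower bound to every root $v$, while $k+1$ geodesics through $a$ plus $2k$ further geodesics give an unrooted isometric path cover of size $3k+1$. Without an explicit construction of this kind and the accompanying counting, neither the $\ipco{G}\geq k$ bounds in (a)--(c) nor the quadratic separation in (d) is established by your argument. (Your hyperbolicity and structural checks are fine in outline and agree with the paper's, which invokes Proposition~\ref{prp:isometric-cycle} on an isometric cycle of length at least $2k$ and uses triangle-freeness together with local neighbour counts on induced cycles.)
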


 {  
Theorem~\ref{thm:lower}\ref{it:d} proves that the approximation algorithm for \textsc{Isometric Path Cover} proposed by Chakraborty et al.~\cite{ChakrabortyD0FG22} cannot provide a $o(\sqrt{n})$ approximation ratio (even if the inputs are restricted to planar bipartite graphs of order $n$). Note that previous known lower bound (stated in~\cite{ChakrabortyD0FG22}) was $o(\sqrt{\log n})$.}

\medskip

%

 \noindent \textbf{Organisation.} In Section~\ref{sec:prelim}, we recall some definitions and some results. In Section~\ref{sec:ipcoInP}, we present an algorithm to compute the isometric path complexity of a graph and prove Theorem~\ref{thm:ipcoInP}. In Section~\ref{sec:main}, we prove Theorem~\ref{thm:main}. In Section~\ref{sec:lower}, we prove Theorem~\ref{thm:lower}. We conclude in Section~\ref{sec:conclu}.

\section{Definitions and preliminary observations}\label{sec:prelim}

In this section, we recall some definitions and some related observations. A sequence of distinct vertices forms a \emph{path} $P$ if any two consecutive vertices are adjacent. 
Whenever we fix a path $P$ of $G$, we shall refer to the subgraph formed by the edges between the consecutive vertices of $P$. The \emph{length} of a path $P$, denoted by $|P|$, is the number of its vertices minus  one. A path is \emph{induced} if there are no graph edges joining non-consecutive vertices. A path is \emph{isometric} if it is a shortest path between its endpoints.  For two vertices $u,v$ of a graph $G$, $\dist{u}{v}$ denotes the length of an isometric path between $u$ and $v$. 

\textcolor{black}{For a path $P$ and a vertex $u\in V(P)$, we will often use the phrase ``$(u,w)$-subpath'' of $P$ to denote a subpath that satisfies certain properties (which will be clear from the context), and has $u,w$ as endpoints, where $w$ is the endpoint distinct from $u$. We note that $w$ and the subpath are getting defined simultaneously, but for the sake of readability, we use the above notation.}

{In a directed graph, a \emph{directed path} is a path in which all arcs are oriented in the same direction.} For a path $P$ of a graph $G$ between two vertices $u$ and $v$, the vertices $V(P)\setminus \{u,v\}$ are \emph{internal vertices} of $P$. A path between two vertices $u$ and $v$ is called a $(u,v)$-path.  Similarly, we have the notions of \emph{isometric $(u,v)$-path} and \emph{induced $(u,v)$-path}. The interval $I(u,v)$ between two vertices $u$ and $v$ consists of all vertices that belong to an isometric $(u,v)$-path. For a vertex $r$ of $G$ and a set $S$ of vertices of $G$, the \emph{distance of $S$ from $r$}, denoted as $\dist{r}{S}$, is the minimum of the distance between any vertex of $S$ and $r$. For a subgraph $H$ of $G$, the \emph{distance of $H$ w.r.t. $r$} is $\dist{r}{V(H)}$. Formally, we have $\dist{r}{S}=\min\{\dist{r}{v}\colon v\in  S\}$ and $\dist{r}{H}=\dist{r}{V(H)}$.  

 For a graph $G$ and a vertex $r \in V(G)$, consider the following operations on $G$. First, remove all edges $xy$ from $G$ such that $\dist{r}{x}=\dist{r}{y}$. Let $G'_r$ be the resulting graph. Then, for each edge $e=xy\in E(G'_r)$ with $\dist{r}{x} = \dist{r}{y} - 1$, orient $e$ from $y$ to $x$. Let $\overrightarrow{G_r}$ be the directed acyclic graph formed after applying the above operation on $G'$. Note that this digraph can easily be computed in linear time using a Breadth-First Search (BFS) traversal with starting vertex $r$. 
 
{  The known approximation algorithm for \IPC from~\cite{ChakrabortyD0FG22} can now be stated as follows: $(i)$ For each vertex $r\in V(G)$, compute $\overrightarrow{G_r}$ and find a minimum path cover $\mathcal{C}_r$ of $\overrightarrow{G_r}$, and then $(ii)$ report a $\mathcal{C}_r$ with minimum cardinality. {The following definition is inspired by the terminology of posets (as the graph $\overrightarrow{G_r}$ can be seen as the Hasse diagram of a poset) and will be useful to analyze the above algorithm.}}

\begin{definition}\label{D:acSet}
\sloppy For a graph $G$ and a vertex $r\in V(G)$, two vertices $x,y\in V(G)$ are \emph{antichain vertices} w.r.t $r$, if there are no directed paths from $x$ to $y$ or from $y$ to $x$ in $\overrightarrow{G_r}$.
\end{definition}

For a graph $G$ and a vertex $r\in V(G)$, a set $X$ of vertices of $G$ is an \emph{antichain set} if any two vertices in $X$ are antichain vertices w.r.t $r$. 
 
\begin{definition}[{\cite{ChakrabortyD0FG22}}]\label{D:acWidth}
Let $r$ be a vertex of a graph $G$. For a subgraph $H$, $\anticp{r}{H}$ shall denote the maximum antichain set of $H$ in $\overrightarrow{G_r}$. The \emph{isometric path antichain cover number} of $\overrightarrow{G_r}$, denoted by $\ipac{\overrightarrow{G_r}}$, is defined as follows: \[\ipac{\overrightarrow{G_r}}=\max\left\{|\anticp{r}{P}|\colon~P~\text{is an isometric path in }G\right\}.\]
The \emph{isometric path antichain cover number} of graph $G$, denoted as $\ipac{G}$, is defined as the minimum over all possible antichain covers of its associated directed acyclic graphs: \[\ipac{G}=\min \left\{\ipac{\overrightarrow{G_r}}\colon r\in V(G)\right\}.\]
\end{definition}

For technical purposes, we also introduce the following definition. For a graph $G$ and a vertex $r$ of $G$, let $\ipcor{r}{G}$ denote the minimum integer $k$ such that any isometric path $P$ of $G$ can be covered by $k$ $r$-rooted isometric paths (The notation reflects that it is a dual notion of $\ipac{\overrightarrow{G_r}}$). Using Dilworth's Theorem~\cite{D50} we prove the following important lemma.

\begin{lemma}\label{lem:ipac-ipco}
    For any graph $G$ and vertex $r$, $\ipcor{r}{G} = \ipac{\overrightarrow{G_r}}$. Therefore,  $\ipco{G}=\ipac{G}$.
\end{lemma}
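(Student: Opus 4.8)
The plan is to fix a vertex $r$ and an isometric path $P$, and to reinterpret $\overrightarrow{G_r}$ as the Hasse diagram of a poset on $V(G)$: declare $x \preceq y$ whenever there is a directed path from $y$ to $x$ in $\overrightarrow{G_r}$. Since $\overrightarrow{G_r}$ is a directed acyclic graph, reachability is reflexive, antisymmetric and transitive, so $\preceq$ is indeed a partial order, and two vertices are antichain vertices (Definition~\ref{D:acSet}) exactly when they are incomparable. In particular, restricting $\preceq$ to $V(P)$, the antichain sets of $P$ are precisely the antichains of the induced poset, and $|\anticp{r}{P}|$ is the size of a maximum such antichain. The whole argument will then reduce to proving, for this fixed $P$, that the minimum number of $r$-rooted isometric paths needed to cover $V(P)$ equals the size of a maximum antichain, after which Dilworth's Theorem supplies the decisive equality.

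The crux is a clean dictionary between chains of the poset and $r$-rooted isometric paths. First I would record the basic observation that a directed path from $y$ to $x$ exists in $\overrightarrow{G_r}$ if and only if $x$ lies on a shortest $(r,y)$-path, i.e. $\dist{r}{y} = \dist{r}{x} + \dist{x}{y}$ (each arc drops the distance to $r$ by exactly one, so a directed path is a descending shortest path). From this, one direction is immediate: the vertices of an $r$-rooted isometric path $Q$ from $r$ to $w$ are pairwise comparable, since any suffix $w = u_d, u_{d-1}, \dots, u_i$ of $Q$ is a descending directed path, so $V(Q)$ is a chain. The other direction is the step I expect to be the main obstacle. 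Given a chain $x_1 \preceq x_2 \preceq \dots \preceq x_\ell$ (ordered so that $\dist{r}{x_1} < \dots < \dist{r}{x_\ell}$, which is forced since comparable distinct vertices have distinct distances), I would concatenate a shortest $(r,x_1)$-path with shortest $(x_i,x_{i+1})$-paths for $i = 1, \dots, \ell-1$. Using the distance-additivity $\dist{r}{x_{i+1}} = \dist{r}{x_i} + \dist{x_i}{x_{i+1}}$ coming from consecutive comparabilities, the total length telescopes to exactly $\dist{r}{x_\ell}$. Hence the resulting walk has length equal to $\dist{r}{x_\ell}$; a minimum-length walk cannot repeat a vertex (otherwise it could be shortcut), so it is in fact a shortest $(r,x_\ell)$-path, that is, an $r$-rooted isometric path containing every $x_i$.

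With this correspondence in hand the conclusion for the fixed $P$ is routine. If $V(P)$ is covered by $k$ $r$-rooted isometric paths, then intersecting each with $V(P)$ yields $k$ chains covering $V(P)$, so the minimum chain cover is at most $k$; conversely, if $V(P)$ is covered by $c$ chains, realizing each chain by a single $r$-rooted isometric path (as above) covers $V(P)$ with $c$ such paths. Therefore the minimum number of $r$-rooted isometric paths covering $V(P)$ equals the minimum number of chains covering $V(P)$, which by Dilworth's Theorem equals the maximum antichain size $|\anticp{r}{P}|$. Since $\ipcor{r}{G}$ is the maximum over all isometric paths $P$ of the former quantity, and $\ipac{\overrightarrow{G_r}}$ is the maximum over all isometric paths $P$ of the latter, we obtain $\ipcor{r}{G} = \ipac{\overrightarrow{G_r}}$. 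Finally, taking the minimum over all $r \in V(G)$ on both sides gives $\ipco{G} = \ipac{G}$, as claimed.

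The only delicate point to watch is that the chains and their realizing isometric paths may use vertices outside $V(P)$; this is harmless, since Dilworth's Theorem applies to the finite poset induced on $V(P)$ regardless of how comparabilities are witnessed, and the definition of an $r$-rooted cover of $V(P)$ allows arbitrary isometric paths of $G$. I would also note in passing that minimum chain cover and minimum chain partition coincide here, so invoking Dilworth in its partition form is legitimate.
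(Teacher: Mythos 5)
Your proof is correct and follows essentially the same route as the paper's: both reduce the statement to Dilworth's theorem via the correspondence between chains of the poset underlying $\overrightarrow{G_r}$ and $r$-rooted isometric paths. You simply make explicit two steps the paper leaves implicit, namely that the vertices of an $r$-rooted isometric path form a chain, and (via the telescoping concatenation $\dist{r}{x_\ell}=\dist{r}{x_1}+\sum_i \dist{x_i}{x_{i+1}}$) that any chain can be realized inside a single $r$-rooted isometric path.
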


\begin{proof}
Let $r$ be a vertex of $G$ such that any isometric path of $G$ can be
covered by $\ipcor{r}{G}$ $r$-rooted isometric paths. Let $P$ be an
arbitrary isometric path of $G.$ Since two vertices of an antichain of
$\overrightarrow{G_r}$ cannot be covered by a single $r$-rooted path and
$P$ is covered by $\ipcor{r}{G}$ many $r$-rooted paths, we deduce $|\anticp{r}{P}|\le
\ipcor{r}{G}$. This is true for any isometric path $P$ of $G$. Hence,
$ \ipac{\overrightarrow{G_r}} \leq \ipcor{r}{G}$. Conversely, consider a vertex $r\in V(G).$ By definition of $\ipcor{r}{G}$,
there is an isometric path $P$ that cannot be covered by $(\ipcor{r}{G}-1)$
$r$-rooted isometric paths. By Dilworth's theorem~\cite{D50}, $P$ contains an
antichain of $\overrightarrow{G_r}$ of size $\ipcor{r}{G}.$ Hence
$|A_r(P)|\geq \ipcor{r}{G}$ and $\ipac{\overrightarrow{G_r}}\geq \ipcor{r}{G}$.
The second part of the lemma follows immediately.
\end{proof}

We also recall the following theorem and proposition from~\cite{ChakrabortyD0FG22}. 

\begin{theorem}[\cite{ChakrabortyD0FG22}]\label{thm:ipac-approx}
For a graph $G$, if $\ipac{G} \leq c$, then \IPC admits a {polynomial-time} $c$-approximation algorithm on $G$.
\end{theorem}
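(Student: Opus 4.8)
The plan is to analyze the two-phase algorithm recalled just before Definition~\ref{D:acSet}: for every root $r\in V(G)$ compute a minimum directed path cover $\mathcal{C}_r$ of the DAG $\overrightarrow{G_r}$, and then output a $\mathcal{C}_r$ of minimum cardinality. I would establish two things separately: first, that the output is always a valid isometric path cover of $G$ and is computable in polynomial time; and second, that its cardinality is at most $c$ times the optimum, where $c=\ipac{G}$.

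For feasibility and running time, the key observation is that every directed path of $\overrightarrow{G_r}$ is an isometric path of $G$. Indeed, along a directed path $u=v_0\to v_1\to\cdots\to v_\ell=w$ each arc decreases the distance to $r$ by exactly one, so $\ell=\dist{r}{u}-\dist{r}{w}$, which matches the triangle-inequality lower bound $\dist{u}{w}\ge \dist{r}{u}-\dist{r}{w}$; hence the path has length $\dist{u}{w}$ and is isometric. Since $V(\overrightarrow{G_r})=V(G)$, any path cover of $\overrightarrow{G_r}$ covers all vertices of $G$ by isometric paths, so $\mathcal{C}_r$, and thus the output, is a feasible isometric path cover. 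A minimum path cover of a DAG is computable in polynomial time via bipartite matching, and we repeat this for the $n$ choices of $r$, giving a polynomial overall running time. In particular the output size is at least the optimum, which I denote $\mathsf{opt}(G)$.

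For the approximation ratio, I would fix the root $r^*$ attaining $\ipac{\overrightarrow{G_{r^*}}}=\ipac{G}=c$ and upper-bound $|\mathcal{C}_{r^*}|$ by exhibiting a feasible chain cover of this specific size. Take an optimal isometric path cover $\{Q_1,\dots,Q_{\mathsf{opt}(G)}\}$ of $G$. For each $Q_i$, the reachability order of $\overrightarrow{G_{r^*}}$ restricted to $V(Q_i)$ is a poset whose largest antichain has size $|\anticp{r^*}{Q_i}|\le \ipac{\overrightarrow{G_{r^*}}}=c$; by Dilworth's theorem $V(Q_i)$ partitions into at most $c$ chains, and each chain, being a set of pairwise comparable vertices, lies on a single directed path of $\overrightarrow{G_{r^*}}$. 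Taking the union over all $i$, the vertices of $G$ are covered by at most $c\cdot \mathsf{opt}(G)$ directed paths of $\overrightarrow{G_{r^*}}$, so $|\mathcal{C}_{r^*}|\le c\cdot \mathsf{opt}(G)$. Since the algorithm returns a $\mathcal{C}_r$ of minimum size, the output has size at most $|\mathcal{C}_{r^*}|\le c\cdot \mathsf{opt}(G)$, yielding the claimed $c$-approximation.

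The two genuine points are the correspondence between directed paths of $\overrightarrow{G_r}$ and isometric paths of $G$ (needed for feasibility) and the Dilworth argument applied path-by-path to an optimal cover (needed for the ratio). I expect the ratio bound to be the main obstacle: the temptation is to compare $\mathcal{C}_{r^*}$ directly against an unknown optimal object, whereas the correct move is to start from an optimal \emph{isometric path} cover and decompose each of its paths into at most $c$ chains of $\overrightarrow{G_{r^*}}$, producing an explicit feasible chain cover of size $\le c\cdot\mathsf{opt}(G)$ that minimality then improves upon. Everything else, including feasibility and polynomiality, is routine once the isometry of directed paths is noted.
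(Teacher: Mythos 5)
The paper itself gives no proof of this theorem: it is imported from~\cite{ChakrabortyD0FG22}, and only the two-phase algorithm (compute a minimum path cover $\mathcal{C}_r$ of $\overrightarrow{G_r}$ for every root $r$, return the smallest) is recalled in Section~\ref{sec:prelim}. Your reconstruction is the right one and matches the cited argument: directed paths of $\overrightarrow{G_r}$ are isometric in $G$ because each arc decreases the distance to $r$ by exactly one, and the ratio follows by Dilworth applied to $V(Q_i)$ for each path $Q_i$ of an optimal cover, using $|\anticp{r^*}{Q_i}|\le c$.

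One point deserves more care than you give it. Your Dilworth step produces, for each $Q_i$, at most $c$ chains, and you extend each chain to a directed path of $\overrightarrow{G_{r^*}}$; these extensions insert intermediate vertices, so the resulting $c\cdot\mathsf{opt}$ directed paths may share vertices, both within one $Q_i$ and across different $Q_i$'s. Hence the comparison $|\mathcal{C}_{r^*}|\le c\cdot\mathsf{opt}$ is valid only if ``minimum path cover'' means covering by directed paths that are \emph{allowed to overlap} (equivalently, a minimum chain cover of the reachability poset, which by Dilworth equals the maximum antichain). It fails for the vertex-disjoint notion: in the DAG with arcs $a_1\to m$, $a_2\to m$, $m\to b_1$, $m\to b_2$, the maximum antichain has size $2$ and two overlapping paths cover everything, but any vertex-disjoint path cover needs $3$ paths. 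This also affects your algorithmic remark: bipartite matching on the arcs of the DAG computes the vertex-disjoint version; for the overlapping version you need Fulkerson's reduction (matching on the transitive closure) or a min-flow formulation --- still polynomial, so the theorem is unharmed, but the proof should say which object is computed. With that clarification your argument is complete; an equivalent and slightly shorter route, dual to yours, is to observe that $|\mathcal{C}_{r^*}|$ equals the size of a maximum antichain $A$ of $\overrightarrow{G_{r^*}}$, and each isometric path of an optimal cover contains at most $c$ vertices of $A$, whence $|A|\le c\cdot\mathsf{opt}$ directly.
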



\begin{proposition}[\cite{ChakrabortyD0FG22}]\label{prp:antichain-length}
Let $G$ be a graph and $r$, an arbitrary vertex of $G$. Consider the directed acyclic graph $\overrightarrow{G_r}$, and let $P$ be an isometric path between two vertices $x$ and $y$ in $G$. Then $|P|\geq |\dist{r}{x}-\dist{r}{y}| + |\anticp{r}{P}| - 1$. 
\end{proposition}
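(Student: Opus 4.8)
The plan is to parametrise the path and convert the antichain condition into a purely metric statement about the distances of the vertices of $P$ from $r$; once this is done, the bound follows by summation and the triangle inequality. Write $P = v_0 v_1 \cdots v_\ell$ with $v_0 = x$, $v_\ell = y$ and $\ell = |P|$, and set $d_i := \dist{r}{v_i}$. Since $P$ is isometric, every subpath $v_i \cdots v_j$ is itself isometric, so $\dist{v_i}{v_j} = j-i$ for $i<j$; in particular $|d_i - d_j| \le j-i$ by the triangle inequality.

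The key step is a characterisation of comparability along $P$: for $i<j$, the vertices $v_i$ and $v_j$ lie on a common directed path of $\overrightarrow{G_r}$ whenever $|d_i - d_j| = j-i$. I only need this one implication (its contrapositive is what the counting uses). To see it, suppose $d_i - d_j = j-i \ge 0$ (the case $d_j - d_i = j-i$ is symmetric). Consecutive vertices of the subpath $v_i, v_{i+1}, \ldots, v_j$ are adjacent, so $d_s - d_{s+1} \le 1$; as these $j-i$ differences sum to $d_i - d_j = j-i$, each equals $1$. Hence the subpath is strictly descending, every edge survives in $\overrightarrow{G_r}$ oriented downward, and it is a directed $(v_i,v_j)$-path, so $v_i$ and $v_j$ are comparable. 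Taking the contrapositive: if $v_i$ and $v_j$ are antichain vertices, then $|d_i - d_j| \ne j-i$, and since $|d_i-d_j|\le j-i$ this gives the gap inequality $j-i \ge |d_i - d_j| + 1$.

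Now let $A = \{v_{i_1}, \ldots, v_{i_k}\}$ with $i_1 < \cdots < i_k$ be a maximum antichain set contained in $V(P)$, so $k = |\anticp{r}{P}|$. All pairs of $A$ are incomparable; applying the gap inequality to each consecutive pair and summing yields
\[
i_k - i_1 \;=\; \sum_{t=1}^{k-1}(i_{t+1}-i_t)\;\ge\;(k-1) + \sum_{t=1}^{k-1} |d_{i_t} - d_{i_{t+1}}|.
\]
Adding the two end segments $v_0\cdots v_{i_1}$ and $v_{i_k}\cdots v_\ell$, whose lengths satisfy $i_1 \ge |d_0 - d_{i_1}|$ and $\ell - i_k \ge |d_{i_k} - d_\ell|$ (again because subpaths are isometric), gives
\[
\ell \;\ge\; |d_0 - d_{i_1}| + (k-1) + \sum_{t=1}^{k-1}|d_{i_t}-d_{i_{t+1}}| + |d_{i_k}-d_\ell|.
\]
A final telescoping application of the triangle inequality collapses the distance terms into $|d_0 - d_\ell| = |\dist{r}{x} - \dist{r}{y}|$, which is exactly $|P| = \ell \ge |\dist{r}{x}-\dist{r}{y}| + |\anticp{r}{P}| - 1$.

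The only genuinely non-mechanical step is the comparability characterisation of the second paragraph; everything after it is summation plus the triangle inequality. The one point requiring a little care is that the end segments may be empty (when $i_1 = 0$ or $i_k = \ell$), but the corresponding bounds $i_1 \ge |d_0 - d_{i_1}|$ and $\ell - i_k \ge |d_{i_k}-d_\ell|$ hold trivially in that degenerate case, so the argument goes through unchanged.
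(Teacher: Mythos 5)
Your proof is correct and takes essentially the same route as the paper's: the paper likewise observes that between two consecutive antichain vertices the subpath of $P$ cannot decrease the distance to $r$ at every step (an all-descending subpath would survive as a directed path in $\overrightarrow{G_r}$), extracting one non-descending edge per gap, disjoint from a set of $|\dist{r}{x}-\dist{r}{y}|$ strictly descending edges, and counting both. Your per-gap inequality $i_{t+1}-i_t \geq |d_{i_t}-d_{i_{t+1}}|+1$ summed and then telescoped via the triangle inequality is the same edge count written as a sum of lengths (and in passing yields the marginally stronger intermediate bound with $\sum_t |d_{i_t}-d_{i_{t+1}}|$ before collapsing it to $|\dist{r}{x}-\dist{r}{y}|$).
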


\begin{proof}
Orient the edges of $P$ from $y$ to $x$ in $G$. First, observe that $\overrightarrow{P}$ must contain a set $E_1$ of oriented edges such that $|E_1|=|\dist{r}{y}-\dist{r}{x}|$ and for any $\overrightarrow{ab}\in E_1$, $\dist{r}{a}=\dist{r}{b}+1$. Let the vertices of the largest antichain set of $P$ in $\overrightarrow{G_r}$, \ie, $\anticp{r}{P}$, be ordered as $a_1,a_2,\ldots,a_t$ according to their occurrence while traversing $P$ from $y$ to $x$. For $i\in [2,t]$, let $P_i$ be the subpath of $P$ between $a_{i-1}$ and $a_i$. Observe that for any $i\in [2,t]$, since $a_i$ and $a_{i-1}$ are antichain vertices, there must exist an oriented edge $\overrightarrow{b_ic_i}\in E(\overrightarrow{P_i})$ such that either $\dist{r}{b_i} = \dist{r}{c_i}$ or $\dist{r}{b_i}=\dist{r}{c_i} - 1$. Let $E_2=\{b_ic_i\}_{i\in [2,t]}$. Observe that $E_1\cap E_2=\emptyset$ and therefore $|P|\geq |E_1| + |E_2| = |\dist{r}{y}-\dist{r}{x}| + |\anticp{r}{P}| - 1$.
\end{proof}

\section{Proof of Theorem \ref{thm:ipcoInP}} \label{sec:ipcoInP}

In this section we provide a polynomial-time algorithm to compute the
isometric path complexity of a graph. Let $G$ be a graph.  In the
following lemma, we provide a necessary and sufficient condition for
two vertices of an isometric path to be covered by the same isometric
$r$-rooted path in $\overrightarrow{G_r}$ for some vertex $r\in V(G)$.

\begin{lemma}\label{lemma1}
  Let $r$ be vertex of $G$. If $P=(u=v_0,\dots,v_k=v)$ is an isometric
  $(u,v)$-path with $\dist{r}{u}\le \dist{r}{v}$ then there exists an
  isometric $r$-rooted path containing $u, v$ in
  $\overrightarrow{G_r}(P)$ if and only if
  $\dist{v_{i+1}}{r} = \dist{v_i}{r}+1$ for all
  $i\in \{0,\dots,k-1\}.$
\end{lemma}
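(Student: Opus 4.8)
The plan is to reduce both implications to a single scalar identity, namely $\dist{r}{v} = \dist{r}{u} + k$, and to first record that, because $P$ is isometric with $\dist{r}{u}\le\dist{r}{v}$, this identity is equivalent to the stated step condition. Indeed, consecutive vertices of $P$ are adjacent, so $\dist{v_{i+1}}{r} - \dist{v_i}{r} \in \{-1,0,1\}$ for every $i$, and telescoping gives $\dist{r}{v} - \dist{r}{u} = \sum_{i=0}^{k-1}\left(\dist{v_{i+1}}{r} - \dist{v_i}{r}\right) \le k$. Since there are exactly $k$ summands, each at most $1$, the sum attains its maximal value $k$ if and only if every summand equals $1$. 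Hence ``$\dist{v_{i+1}}{r} = \dist{v_i}{r}+1$ for all $i$'' holds if and only if $\dist{r}{v} = \dist{r}{u} + k$. Everything after this is just showing that the existence of a common $r$-rooted path is equivalent to this same identity.

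For the ($\Leftarrow$) direction I would assume the step condition, which by the remark above yields $\dist{r}{v} = \dist{r}{u} + k = \dist{r}{u} + \dist{u}{v}$, using that $P$ is isometric so $|P| = k = \dist{u}{v}$. I would then pick any isometric $(r,u)$-path $Q$ and concatenate it with $P$. The resulting walk from $r$ to $v$ has length $\dist{r}{u} + k = \dist{r}{v}$, so it is a shortest $(r,v)$-walk; as shortest walks are simple paths, $Q\cdot P$ is an isometric $r$-rooted path, and it visibly contains both $u$ and $v$. If one prefers to verify simplicity directly, note that the internal vertices of $P$ lie at distance strictly greater than $\dist{r}{u}$ from $r$, while every vertex of $Q$ lies at distance at most $\dist{r}{u}$, so the two pieces meet only at $u$.

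For the ($\Rightarrow$) direction I would begin with an $r$-rooted isometric path $R$, from $r$ to some vertex $w$, that contains both $u$ and $v$. Along any shortest path from $r$ the distance to $r$ increases by exactly $1$ at each step, so the vertices of $R$ carry pairwise distinct distances to $r$ and are linearly ordered by them; since $\dist{r}{u}\le\dist{r}{v}$, the vertex $u$ occurs no later than $v$ on $R$ (and $u=v$, $k=0$, in the degenerate case of equal distances). The subpath $R[u,v]$ is itself a shortest $(u,v)$-path, and monotonicity of distance along $R$ gives $\dist{r}{v} = \dist{r}{u} + |R[u,v]| = \dist{r}{u} + \dist{u}{v} = \dist{r}{u} + k$. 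By the equivalence established at the outset, this forces $\dist{v_{i+1}}{r} = \dist{v_i}{r}+1$ for all $i$.

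The one point I would be most careful about is that the covering path $R$ in the forward direction need not run along $P$ between $u$ and $v$: its $(u,v)$-subpath may be a different shortest path entirely. The argument sidesteps this because it uses only the endpoint distances $\dist{r}{u},\dist{r}{v}$ together with the fact that $P$, being isometric, has length exactly $\dist{u}{v}$; the scalar identity then transfers back to the per-edge condition on $P$ through the telescoping/counting step. The only other routine check is that the concatenation $Q\cdot P$ is a genuine simple path, which is immediate from its being a shortest walk.
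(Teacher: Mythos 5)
Your proof is correct and follows essentially the same route as the paper's: for the backward direction you concatenate an isometric $(r,u)$-path with $P$, and for the forward direction you reduce the existence of a common $r$-rooted isometric path to the identity $\dist{r}{v}-\dist{r}{u}=\dist{u}{v}$ and then force every edge of $P$ to increase the distance to $r$ by a counting/telescoping argument. Your version merely makes explicit two points the paper leaves implicit (the simplicity of the concatenated walk, and the derivation of the distance identity from the covering path $R$ via its $(u,v)$-subpath), which is fine.
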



\begin{proof}
  If $\dist{v_{i+1}}{r}= \dist{v_i}{r}+1$ for every
  $i\in \{0,\dots,k-1\}$ then the path obtained by concatenating an
  isometric $(r,u)$-path and the path $P$ is an isometric $r$-rooted
  $(r,v)$-path containing $u, v$ in $\overrightarrow{G_r}(P)$.  Now
  suppose that there exists an isometric $r$-rooted path containing
  $u, v$ in $\overrightarrow{G_r}(P)$, \ie,
  $\dist{r}{v}-\dist{r}{u}=\dist{u}{v}.$ Then, along any path from $u$
  to $v$, we need to traverse at least $\dist{u}{v}$ edges increasing
  the distance to $r$. Since $P$ is an isometric $(u,v)$-path, it
  contains exactly $\dist{u}{v}$ edges. Hence,
  $\dist{r}{v_{i+1}}=\dist{r}{v_i}+1$ for every
  $i\in \{0,\dots,k-1\}$.
\end{proof}

\subsection{Notations and preliminary observations}

We now introduce some notations that will be used to describe the
algorithm and prove its correctness.  Consider three vertices $r,x,v$
of $G$ such that $x \neq v$. Let $\pathseta{r}{x}{v}$ denote the set
of all isometric $(x,v)$-paths $P$ containing a vertex $u$ that is
adjacent to $v$ and satisfies
$\dist{r}{u}=\dist{r}{v}-1$. Analogously, let $\pathseteq{r}{x}{v}$
denote the set of all isometric $(x,v)$-paths $P$ containing a vertex
$u$ that is adjacent to $v$ and satisfies $\dist{r}{u}=\dist{r}{v}$
and let $\pathsetd{r}{x}{v}$ denote the set of all isometric
$(x,v)$-paths $P$ containing a vertex $u$ that is adjacent to $v$ and
satisfies $\dist{r}{u}=\dist{r}{v}+1$. Observe that the set of
isometric $(x,v)$-paths is precisely
$\pathseta{r}{x}{v} \cup \pathseteq{r}{x}{v} \cup \pathsetd{r}{x}{v}$
and that some of these sets may be empty.

Given a path $P$, we denote by $|\coverP{r}{P}|$ the minimum size of a
set of isometric $r$-rooted paths covering the vertices of $P$.  We
denote by $\gamma^r_{\searrow}(x,v)$ and $\beta^r_{\searrow}(x,v)$
respectively the \textcolor{black}{maximum} of $|\coverP{r}{P}|$ and
$|\coverP{r}{P-\{v\}}|$ over all paths $P\in \pathseta{r}{x}{v}$. More
formally,
\begin{align*}
  \gamma^r_{\searrow}(x,v)&=\max\left\{ |\coverP{r}{P}| \colon P \in \pathseta{r}{x}{v} \right\}, \\
  \beta^r_{\searrow}(x,v)&=\max\left\{ |\coverP{r}{P - \{v\}}|\colon P\in \pathseta{r}{x}{v} \right\}.
\end{align*}
Note that if $\pathseta{r}{x}{v}$ is empty, we have
$\gamma^r_{\searrow}(x,v) = \beta^r_{\searrow}(x,v) = 0$.  We define
similarly $\gamma^r_{\nearrow}(x,v)$, $\beta^r_{\nearrow}(x,v)$, and
$\gamma^r_{\rightarrow}(x,v)$:
\vspace{-7.5pt}
\begin{align*}
  \gamma^r_{\nearrow}(x,v)&=\max\left\{ |\coverP{r}{P}|\colon P\in \pathsetd{r}{x}{v} \right\},\\ 
  \beta^r_{\nearrow}(x,v)&=\max\left\{ |\coverP{r}{P- \{v\}}|\colon P\in \pathsetd{r}{x}{v} \right\},\\
  \gamma^r_{\rightarrow}(x,v)&=\max\left\{ |\coverP{r}{P}|\colon P\in \pathseteq{r}{x}{v} \right\}. %
\end{align*}
Finally, let
$\gamma^r(x,v) = \max\left\{ \gamma^r_{\searrow}(x,v),
  \gamma^r_{\rightarrow}(x,v) , \gamma^r_{\nearrow}(x,v) \right\} $ be
the maximum of $|S_r(P)|$ over all isometric $(x,v)$-paths $P$.
In our algorithm, we will need also to consider the case where $v=x$
as an initial case. For practical reasons, we let
$\gamma^r(x,x) = \gamma^r_{\searrow}(x,x) =
\gamma^r_{\rightarrow}(x,x) = \gamma^r_{\nearrow}(x,x) = 1$ and
$\beta^r_{\searrow}(x,x) = \beta^r_{\nearrow}(x,x) =0$.
Based on the above notations and Lemma~\ref{lem:ipac-ipco}, we have the following observation.

\begin{observation}\label{obs:triv-poly}
  For any graph $G$ and any vertex $r$ of $G$, we have
  $\ipco{\overrightarrow{G_r}} = \ipac{\overrightarrow{G_r}} =
  \max_{x,v} \gamma^r(x,v)$ and
  $\ipco{G} = \ipac{G} = \min_{r} \max_{x,v} \gamma^r(x,v)$.
\end{observation}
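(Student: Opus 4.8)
The plan is to observe that the whole statement follows by unwinding the definitions introduced in this subsection and then appealing to Lemma~\ref{lem:ipac-ipco}. I would first argue that for any fixed pair $x,v$ with $x\neq v$, the quantity $\gamma^r(x,v)$ equals the maximum of $|\coverP{r}{P}|$ over all isometric $(x,v)$-paths $P$. The key point is that the three families $\pathseta{r}{x}{v}$, $\pathseteq{r}{x}{v}$, $\pathsetd{r}{x}{v}$ partition the set of all isometric $(x,v)$-paths: every such path $P$ has a unique penultimate vertex $u$ adjacent to $v$, and since $u$ and $v$ are adjacent we have $\dist{r}{u}\in\set{\dist{r}{v}-1,\dist{r}{v},\dist{r}{v}+1}$, placing $P$ in exactly one of the three families according to this value. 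As $\gamma^r_{\searrow}(x,v)$, $\gamma^r_{\rightarrow}(x,v)$, $\gamma^r_{\nearrow}(x,v)$ are by definition the maxima of $|\coverP{r}{P}|$ over the respective families, their maximum $\gamma^r(x,v)$ is the maximum of $|\coverP{r}{P}|$ over all isometric $(x,v)$-paths. The degenerate case $v=x$ (a single-vertex isometric path) is covered by one $r$-rooted path and contributes value $1$, consistent with the convention $\gamma^r(x,x)=1$.

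Next I would take the maximum over all ordered pairs $x,v$, which yields $\max_{x,v}\gamma^r(x,v)=\max\set{|\coverP{r}{P}|\colon P \text{ an isometric path of } G}$. Since $|\coverP{r}{P}|$ is, by definition, the least number of $r$-rooted isometric paths needed to cover $V(P)$, the right-hand side is precisely the least integer $k$ such that every isometric path of $G$ can be covered by $k$ $r$-rooted isometric paths, i.e.\ $\ipcor{r}{G}$. Hence $\max_{x,v}\gamma^r(x,v)=\ipcor{r}{G}$, and Lemma~\ref{lem:ipac-ipco} gives $\ipcor{r}{G}=\ipac{\overrightarrow{G_r}}$, establishing the first chain of equalities (recalling that $\ipco{\overrightarrow{G_r}}$ is just alternative notation for $\ipcor{r}{G}$).

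Finally I would minimise over $r$. By Definition~\ref{D:ipco}, $\ipco{G}=\min_{r\in V(G)}\ipcor{r}{G}$, and by Definition~\ref{D:acWidth}, $\ipac{G}=\min_{r\in V(G)}\ipac{\overrightarrow{G_r}}$; combining these with the per-vertex equalities just proved shows that both equal $\min_{r}\max_{x,v}\gamma^r(x,v)$, which is the second assertion.

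There is no genuine obstacle here: the statement is a reformulation obtained by chasing definitions, and the only points requiring care are the verification that the three path families exhaust all isometric $(x,v)$-paths (which rests on the fact that adjacent vertices differ in distance to $r$ by at most one) and the correct handling of the trivial $v=x$ case. All the substantive content---namely the equality $\ipcor{r}{G}=\ipac{\overrightarrow{G_r}}$ via Dilworth's theorem---has already been established in Lemma~\ref{lem:ipac-ipco}.
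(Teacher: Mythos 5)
Your proposal is correct and matches the paper's intent exactly: the paper states this as an observation following directly from the notations (in particular the fact that $\pathseta{r}{x}{v}\cup\pathseteq{r}{x}{v}\cup\pathsetd{r}{x}{v}$ exhausts all isometric $(x,v)$-paths, which the paper notes right after the definitions) together with Lemma~\ref{lem:ipac-ipco}, and your write-up simply makes this definitional chase explicit, including the $v=x$ convention. No gaps; you even correctly flag that $\ipco{\overrightarrow{G_r}}$ is the same notation as $\ipcor{r}{G}$.
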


Observation~\ref{obs:triv-poly} implies that to compute the isometric
path complexity of a graph it is enough to compute the parameter
$\gamma^r(x,v)$ for all $r,x,v\in V(G)$ in polynomial time. In the
next section, we focus on achieving this goal without computing
explicitly any of the sets $\pathseta{r}{x}{v}$, $\pathseteq{r}{x}{v}$
or $\pathsetd{r}{x}{v}$. (Note that the size of these sets could be
exponential in the number of vertices of the graph).

\subsection{An algorithm to compute \boldmath{$\gamma^r(x,v)$}}

Throughout this section, let $r$ and $x$ be two fixed vertices of $G$. We shall call $r$ as the ``root'' and $x$ as the ``source'' vertex. The objective of this section is to compute the parameter $\gamma^r(x,v)$ for all vertices $v\in V(G)$. 

In the sequel, since we always refer to a fixed root $r$ and source
$x$, we omit $r$ and $x$ and use the shorthand $\gamma(v)$ for
$\gamma^r(x,v).$ We do the same with the notations
$\gamma_{\nearrow}(v)$, $\gamma_{\rightarrow}(v)$,
$\gamma_{\searrow}(v)$, $\beta_{\nearrow}(v)$, and
$\beta_{\searrow}(v)$ that also refer to fixed vertices $r$ and
$x$
In the following lemmas, we shall provide explicit (recursive)
formulas to compute $\gamma_{\nearrow}(v)$, $\gamma_{\rightarrow}(v)$,
$\gamma_{\searrow}(v)$, $\beta_{\nearrow}(v)$, and
$\beta_{\searrow}(v)$. Using these formulas, we will show how to
compute $\gamma(v)$ for all $v\in V(G)$ in a total of
$O(|E(G)|)$-time.


\begin{observation}\label{lem:beta-arrow}
  If $r$ is the root vertex, $x$ the source vertex, and $v$ is
  distinct from $x$, then
  \begin{align*}
    \beta_{\searrow}(v) &= \max \{ \gamma(u) : u\in I(x,v) \cap N(v);\
                          \dist{r}{u}=\dist{r}{v}-1 \},\\
    \beta_{\nearrow}(v) &= \max \{ \gamma(u) : u\in I(x,v) \cap N(v);\
                          \dist{r}{u}=\dist{r}{v}+1 \}.
  \end{align*}
\end{observation}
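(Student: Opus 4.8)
The plan is to prove both identities by the same argument, handling $\beta_{\searrow}(v)$ in detail and then obtaining $\beta_{\nearrow}(v)$ by replacing $\pathseta{r}{x}{v}$ with $\pathsetd{r}{x}{v}$ and the distance condition $\dist{r}{u}=\dist{r}{v}-1$ with $\dist{r}{u}=\dist{r}{v}+1$. The structural observation I would establish first is that, on any isometric $(x,v)$-path $P=(x=w_0,\dots,w_k=v)$, the only vertex adjacent to $v$ is the penultimate vertex $w_{k-1}$: since $P$ is isometric we have $\dist{w_i}{v}=k-i$ for each $i$, so $w_i\in N(v)$ forces $k-i=1$. Consequently, a path $P$ belongs to $\pathseta{r}{x}{v}$ exactly when its penultimate vertex $u=w_{k-1}$ satisfies $\dist{r}{u}=\dist{r}{v}-1$, and in that case $P-\{v\}$ is an isometric $(x,u)$-path, because a subpath of an isometric path is isometric.

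Next I would characterise the admissible penultimate vertices by showing that $u$ is the penultimate vertex of some isometric $(x,v)$-path if and only if $u\in I(x,v)\cap N(v)$. For the forward direction, $u$ lies on $P$ and hence in $I(x,v)$, while $uv\in E(G)$ gives $u\in N(v)$. Conversely, if $u\in I(x,v)\cap N(v)$ then $\dist{x}{u}+\dist{u}{v}=\dist{x}{v}$ and $\dist{u}{v}=1$, so $\dist{x}{u}=\dist{x}{v}-1$; appending the edge $uv$ to any isometric $(x,u)$-path then yields an isometric $(x,v)$-path whose penultimate vertex is $u$. Combined with the previous step, this identifies the set of penultimate vertices occurring among the paths of $\pathseta{r}{x}{v}$ precisely with $\{u\in I(x,v)\cap N(v):\dist{r}{u}=\dist{r}{v}-1\}$.

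The final step is to group the maximisation defining $\beta_{\searrow}(v)$ by the penultimate vertex. For each admissible $u$, the map $P\mapsto P-\{v\}$ is a bijection between the paths of $\pathseta{r}{x}{v}$ with penultimate vertex $u$ and the isometric $(x,u)$-paths, with inverse given by appending the edge $uv$. Since $|\coverP{r}{P-\{v\}}|$ depends only on $P-\{v\}$, and $\gamma(u)=\gamma^r(x,u)$ is by definition the maximum of $|\coverP{r}{Q}|$ over all isometric $(x,u)$-paths $Q$, maximising over this family returns exactly $\gamma(u)$. Taking the maximum over all admissible $u$ then gives $\beta_{\searrow}(v)=\max\{\gamma(u):u\in I(x,v)\cap N(v),\ \dist{r}{u}=\dist{r}{v}-1\}$; and when no such $u$ exists, $\pathseta{r}{x}{v}=\emptyset$ so both sides equal $0$ by the stated conventions. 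I expect no serious obstacle, as the argument is elementary, but the one point that genuinely needs care is the first observation that only the penultimate vertex of an isometric path can be adjacent to $v$: it is precisely this fact that makes membership in $\pathseta{r}{x}{v}$ depend solely on the penultimate vertex and lets $P-\{v\}$ range over all isometric $(x,u)$-paths, which is what the clean recursion requires.
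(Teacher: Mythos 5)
Your proof is correct: the paper states this result as an observation with no proof at all, and your argument — that on an isometric $(x,v)$-path the penultimate vertex is the only one adjacent to $v$ (since $\dist{w_i}{v}=k-i$ on a shortest path), so membership in $\pathseta{r}{x}{v}$ depends only on that vertex and $P\mapsto P-\{v\}$ is a bijection onto the isometric $(x,u)$-paths for each admissible $u\in I(x,v)\cap N(v)$ — is exactly the routine definitional verification the paper leaves implicit. Your treatment of the edge cases also matches the paper's stated conventions, namely that both sides equal $0$ when $\pathseta{r}{x}{v}$ (respectively $\pathsetd{r}{x}{v}$) is empty, and that $u=x$ is handled by the convention $\gamma^r(x,x)=1$.
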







\begin{lemma}\label{lem:gamma-rightarrow}
  If $r$ is the root vertex, $x$ the source vertex, and $v$ is
  distinct from $x$, then {\color{black} $\gamma_{\rightarrow}(v) = 0$  if $\pathseteq{r}{x}{v}=\emptyset$ and}
  $\gamma_{\rightarrow}(v) = \max \{ 1+\gamma(u) : u\in I(x,v) \cap
  N(v);\ \dist{r}{u}=\dist{r}{v} \}$
  {\color{black} otherwise}.
\end{lemma}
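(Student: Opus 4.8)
The plan is to reduce the statement to a single structural fact about the ``horizontal'' last edge of the paths in $\pathseteq{r}{x}{v}$. First I would record the shape of these paths. Since an isometric path is induced, the only neighbour of $v$ lying on an isometric $(x,v)$-path $P$ is its penultimate vertex $u$; thus $P \in \pathseteq{r}{x}{v}$ precisely when this $u$ satisfies $\dist{r}{u} = \dist{r}{v}$, in which case $u \in I(x,v) \cap N(v)$ and $P - \{v\}$ is an isometric $(x,u)$-path. Conversely, for every such $u$ and every isometric $(x,u)$-path $Q$, the concatenation of $Q$ with the edge $uv$ is an isometric $(x,v)$-path in $\pathseteq{r}{x}{v}$: since $u \in I(x,v)$ we have $\dist{x}{u}+1 = \dist{x}{v}$, so this path has length $\dist{x}{v}$ and avoids $v$ (all its other vertices being strictly closer to $x$). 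Hence the paths of $\pathseteq{r}{x}{v}$ with penultimate vertex $u$ correspond bijectively to the isometric $(x,u)$-paths, via $P \mapsto P-\{v\}$.

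The crux, and the step I expect to be the main obstacle, is the following claim: if $P = (x = v_0, \dots, v_{k-1} = u, v_k = v)$ is isometric with $\dist{r}{u} = \dist{r}{v}$, then no $r$-rooted isometric path can contain $v$ together with any other vertex $v_j$ of $P$. To prove it, suppose some $r$-rooted isometric path contained both $v$ and $v_j$ with $j < k$. Since $v$ and $v_j$ then lie on a common shortest path from $r$, the portion of that path between them is itself shortest, forcing $|\dist{r}{v} - \dist{r}{v_j}| = \dist{v}{v_j} = k-j$ (the last equality because $P$ is isometric). On the other hand, using $\dist{r}{v} = \dist{r}{v_{k-1}}$ and the triangle inequality, $|\dist{r}{v} - \dist{r}{v_j}| = |\dist{r}{v_{k-1}} - \dist{r}{v_j}| \le \dist{v_{k-1}}{v_j} = k-1-j < k-j$, a contradiction. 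Hence any $r$-rooted isometric path through $v$ meets $P$ only in $v$; intuitively, the equal-distance last edge isolates $v$ in the covering relation.

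From this claim the cover identity $|\coverP{r}{P}| = 1 + |\coverP{r}{P - \{v\}}|$ follows by elementary counting: in any rooted-path cover of $V(P)$, every path covering $v$ covers nothing else of $P$, so the paths not covering $v$ already cover $V(P)\setminus\{v\}$ and number at least $|\coverP{r}{P-\{v\}}|$, giving the lower bound $1 + |\coverP{r}{P-\{v\}}|$; adjoining one arbitrary rooted path through $v$ to an optimal cover of $P-\{v\}$ attains it. Finally I would take the maximum over $P \in \pathseteq{r}{x}{v}$, grouping by the penultimate vertex $u$: by the bijection above, $\max_{P} |\coverP{r}{P - \{v\}}|$ restricted to a fixed $u$ equals $\max_{Q} |\coverP{r}{Q}| = \gamma(u)$ taken over isometric $(x,u)$-paths $Q$. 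Therefore $\gamma_{\rightarrow}(v) = 1 + \max_u \gamma(u) = \max\{1 + \gamma(u) : u \in I(x,v) \cap N(v),\ \dist{r}{u} = \dist{r}{v}\}$, as claimed. The boundary situations are consistent: if no admissible $u$ exists then $\pathseteq{r}{x}{v}$ is empty and both sides vanish under the stated convention, while a penultimate vertex $u = x$ contributes $1 + \gamma(x) = 2$ through the convention $\gamma^r(x,x)=1$.
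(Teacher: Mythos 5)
Your proposal is correct and takes essentially the same route as the paper's proof: both hinge on the fact (the paper's Lemma~\ref{lemma1}, whose relevant direction you re-derive inline via the $1$-Lipschitz property of $\dist{r}{\cdot}$) that the horizontal last edge forces any $r$-rooted isometric path through $v$ to meet $P$ only in $v$, which yields $|\coverP{r}{P}| = 1 + |\coverP{r}{P-\set{v}}|$ and hence the recursion. Your only departure is presentational: you establish this identity uniformly for every $P\in\pathseteq{r}{x}{v}$ and then maximize through the bijection $P\mapsto P-\set{v}$, whereas the paper obtains the two inequalities from extremal choices of $P$ and $Q'$.
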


\begin{proof}
  Observe that $\pathseteq{r}{x}{v}$ is empty if and only if there is
  no vertex $u\in I(x,v) \cap N(v)$ such that
  $\dist{r}{u}=\dist{r}{v}$. If $\pathseteq{r}{x}{v}$ is empty, then
  $\gamma_{\rightarrow}(v) =0$ and we are done.
  
  Suppose now that $\pathseteq{r}{x}{v} \neq \emptyset$.  Let
  $P=(x=v_0,\dots,v_{i-1},v_i=v)$ be a path such that
  $|\coverP{r}{P}| = \gamma_{\rightarrow}(v)$. Observe that
  $\dist{r}{v_{i-1}}=\dist{r}{v_i}$.  Let $Q = (v_0,\dots,v_{i-1})$
  and consider a set $S$ of isometric $r$-rooted paths covering the
  vertices of $Q$ of size $|\coverP{r}{Q}|$ and a $(r,v_i)$-shortest
  path $P_i$. Observe that $S \cup \{P_{i}\}$ is a set of isometric
  $r$-rooted paths covering the vertices of $P$. Consequently
  $\gamma_{\rightarrow}(v_i) = |\coverP{r}{P}| \leq |\coverP{r}{Q}|+1
  \leq \gamma(v_{i-1})+1$.

  Consider now an isometric $(x,v_{i-1})$-path $Q'$ such that
  $\gamma(v_{i-1}) = |\coverP{r}{Q'}|$. Let $P'$ be the isometric
  $(x,v_i)$-path obtained by appending $v_i$ to $Q'$. Consider a set
  $S'$ of isometric $r$-rooted paths covering the vertices of $P'$ of
  size $|\coverP{r}{P'}|$ and let $P_i' $ be a path of $S'$ covering
  $v_i$.  By Lemma~\ref{lemma1}, no vertex of $Q'$ is covered by
  $P_i'$. Consequently, $S'\setminus \{P_i'\}$ is a set of isometric
  $r$-rooted paths covering all vertices of $Q'$ and thus
  $\gamma(v_{i-1}) \leq |\coverP{r}{P'}| -1 \leq
  \gamma_{\rightarrow}(v_i) -1$. Thus, we have
  $\gamma_{\rightarrow}(v_i) = \gamma(v_{i-1}) +1$.
\end{proof}

\begin{lemma}\label{lem:gamma-searrow}
  If $r$ is the root vertex, $x$ the source vertex, and $v$ is a
  vertex distinct from $x$, then {\color{black} $\gamma_{\searrow}(v) = 0$  if $\pathseta{r}{x}{v}=\emptyset$ and} 
  $\gamma_{\searrow}(v) = 
    \max \{ \max\{\gamma_{\searrow}(u),\gamma_{\rightarrow}(u),\beta_{\nearrow}(u)+1\}
  : u\in I(x,v) \cap N(v);\ \dist{r}{u}=\dist{r}{v}-1 \}$ 
  {\color{black} otherwise}.
\end{lemma}

\begin{proof}
  Observe that $\pathseta{r}{x}{v}$ is empty if and only if there is
  no vertex $u\in I(x,v) \cap N(v)$ such that
  $\dist{r}{u}=\dist{r}{v}-1$. If $\pathseta{r}{x}{v}$ is empty, then
  $\gamma_{\searrow}(v) =0$ and we are done.  Assume now that
  $\pathseta{r}{x}{v} \neq \emptyset$. If $v$ is adjacent to $x$, then
  $P = (x,v)$ is the unique isometric $(x,v)$-path, and since
  $\pathseta{r}{x}{v} \neq \emptyset$, we have
  $\dist{r}{x} = \dist{r}{v}-1$.  Then $P$ can be covered by any
  isometric $(r,v)$-path containing $x$, and thus
  $\gamma_{\searrow}(v) = |\coverP{r}{P}| = 1 = \gamma_{\searrow}(x) =
  \gamma_{\rightarrow}(x) = 1 + \beta_{\nearrow}(x)$.
  
  Assume now that $v$ is not adjacent to $x$. Let
  $P=(x=v_0,\dots,v_{i-1},v_i=v)$ be a path such that
  $|\coverP{r}{P}| = \gamma_{\searrow}(v)$, let $Q=(v_0,\dots,v_{i-1})$,
  and let $R=(v_0,\dots,v_{i-2})$. Note that
  $\dist{r}{v_{i-1}}=\dist{r}{v_i}-1$.
  
  First suppose that $\dist{r}{v_{i-2}}=\dist{r}{v_{i-1}}-1$.
  We claim that $|\coverP{r}{P}| \leq |\coverP{r}{Q}|$. Indeed,
  consider a set $S$ of isometric $r$-rooted paths covering the
  vertices of $Q$ of size $|\coverP{r}{Q}|$. Let $P_{i-1} \in S$ be a
  path covering $v_{i-1}$. By Lemma~\ref{lemma1} and since
  $\dist{r}{v_{i-2}}=\dist{r}{v_{i-1}}-1$, we can assume that
  $P_{i-1}$ is an isometric $(r,v_{i-1})$-path. Consider the path
  $P_i$ obtained by appending $v_i$ at the end of $P_{i-1}$ and
  observe that $P_i$ is an isometric $(r,v_{i})$-path covering the
  same vertices as $P_{i-1}$ as well as $v_i$. Consequently, replacing
  $P_{i-1}$ by $P_i$ in $S$, we obtain a set of isometric $r$-rooted
  paths of size $|S| = |\coverP{r}{Q}|$ covering all vertices of $P$,
  establishing that $|\coverP{r}{P}| \leq |\coverP{r}{Q}|$. Since
  $|\coverP{r}{Q}| \leq \gamma_{\searrow}(v_{i-1}) \leq
  \gamma(v_{i-1}) \leq \gamma_{\searrow}(v) = |\coverP{r}{P}| \leq
  |\coverP{r}{Q}|$, we have
  $\gamma_{\searrow}(v) = \gamma_{\searrow}(v_{i-1})$.

  Suppose now that $\dist{r}{v_{i-2}}=\dist{r}{v_{i-1}}$. As in the
  previous case, we show that $|\coverP{r}{P}| \leq
  |\coverP{r}{Q}|$. Indeed, consider a set $S$ of isometric $r$-rooted
  paths covering the vertices of $Q$ of size $|\coverP{r}{Q}|$. Let
  $P_{i-1} \in S$ be a path covering $v_{i-1}$. By Lemma~\ref{lemma1}
  and since $\dist{r}{v_{i-2}}=\dist{r}{v_{i-1}}$, $v_{i-1}$ is the
  unique vertex of $Q$ covered by $P_{i-1}$. Consequently, if we
  replace $P_{i-1}$ in $S$ by an isometric $(r,v_i)$-path going
  through $v_{i-1}$, we obtain a set of isometric $r$-rooted paths of
  size $|S| = |\coverP{r}{Q}|$ covering all vertices of $P$,
  establishing that $|\coverP{r}{P}| \leq |\coverP{r}{Q}|$. Since
  $|\coverP{r}{Q}| \leq \gamma_{\rightarrow}(v_{i-1}) \leq
  \gamma(v_{i-1}) \leq \gamma_{\searrow}(v) = |\coverP{r}{P}| \leq
  |\coverP{r}{Q}|$, we have
  $\gamma_{\searrow}(v) = \gamma_{\rightarrow}(v_{i-1})$.

  Finally, suppose that $\dist{r}{v_{i-2}}=\dist{r}{v_{i-1}}+1$ {\color{black} (see Figure \ref{fig:lemma-gamma-searrow} for an illustration of this case).}
  Consider a set $S$ of isometric $r$-rooted paths covering the
  vertices of $R$ of size $|\coverP{r}{R}|$ and a $(r,v_i)$-shortest
  path $P_i$ containing $v_{i-1}$. Observe that $S \cup \{P_{i}\}$ is
  a set of isometric $r$-rooted paths covering the vertices of
  $P$. Consequently,
  $\gamma_{\searrow}(v_{i}) = |\coverP{r}{P}| \leq |\coverP{r}{R}| + 1
  \leq \gamma(v_{i-2}) +1 \leq \beta_{\nearrow}(v_{i-1})+1$.  Consider
  now an isometric $(x,v_{i-1})$-path $Q'$ such that
  $\beta_{\nearrow}(v_{i-1}) =|\coverP{r}{R'}|$ where
  $R' = Q' - \{v_{i-1}\}$.  Let $P'$ be the isometric $(x,v_i)$-path
  obtained by appending $v_i$ to $Q'$.  Consider a set $S'$ of
  isometric $r$-rooted paths covering the vertices of $P'$ of size
  $|\coverP{r}{P'}|$ and let $P_i'$ be the path of $S'$ covering
  $v_i$.  By Lemma~\ref{lemma1}, the only vertex of $Q'$ that can be
  covered by $P_i'$ is $v_{i-1}$.  Consequently,
  $S' \setminus \{P_i'\}$ is a set of isometric $r$-rooted paths
  covering all vertices of $R'$ and thus
  $\beta_{\nearrow}(v_{i-1}) = |\coverP{r}{R'}| \leq |\coverP{r}{P'}|
  - 1 \leq \gamma_{\searrow}(v_i)-1$. Thus, we have
  $\gamma_{\searrow}(v_i) = \beta_{\nearrow}(v_{i-1})+1$.

  Since the formula for computing $\gamma_{\searrow}(v)$ (given in the
  statement of the lemma) takes into account these three exclusive
  alternatives, it computes $\gamma_{\searrow}(v)$ correctly.
\end{proof}

\begin{figure}[t]
\centering
\includegraphics[scale=0.6]{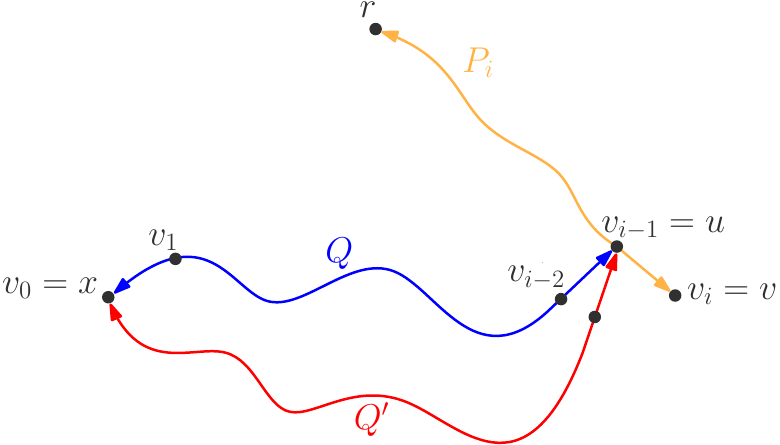}
\caption{\color{black}An illustration for the third case in the proof of Lemma \ref{lem:gamma-searrow}.}
\label{fig:lemma-gamma-searrow}
\end{figure}

\begin{lemma}\label{lem:gamma-nearrow}
  If  $r$ is  the root vertex, $x$  the source vertex, and $v$ is a
  vertex distinct from $x$, then {\color{black} $\gamma_{\nearrow}(v) = 0$ if $\pathsetd{r}{x}{v}=\emptyset$ and}
  $\gamma_{\nearrow}(v) = \max \{
  \max\{\gamma_{\nearrow}(u),\gamma_{\rightarrow}(u),\beta_{\searrow}(u)+1\}
  : u\in I(x,v) \cap N(v);\ \dist{r}{u}=\dist{r}{v}+1 \}$ 
  {\color{black} otherwise}.
\end{lemma}

  \begin{proof}
    The proof is similar to the the proof of
    Lemma~\ref{lem:gamma-searrow}.
  \end{proof}

\sloppy Now we provide a BFS based algorithm to compute the above parameters.
Let $r$ and $x$ be fixed root and source vertices of $G$,
respectively. For a vertex $u\in V(G)$, let
$\mathcal{D}(u) = \{\gamma(u), \gamma_{\nearrow}(u),
\gamma_{\rightarrow}(u), \gamma_{\searrow}(u), \beta_{\nearrow}(u),
\beta_{\searrow}(u)\}$.  Clearly, the set $\mathcal{D}(x)$ can be
computed in constant time. Now let $X_i$ be the set of vertices at
distance $i$ from $x$. Clearly, the sets $X_i$ can be computed in
$O(|E(G)|)$-time (using a BFS) and $X_0=\{x\}$. Let $i\geq 1$ be an
integer and assume that for all vertices
$u \in \bigcup_{j=0}^{i-1} X_j$, the set $\mathcal{D}(u)$ is already
computed. Let $v\in X_i$ be a vertex. Then due to the formulas given
in Observation~\ref{lem:beta-arrow} and
Lemmas~\ref{lem:gamma-rightarrow}--\ref{lem:gamma-nearrow}, the set
$\mathcal{D}(v)$ can be computed by observing only the sets
$\mathcal{D}(u)$, $u\in N(v) \cap X_{i-1}$. Hence, for all vertices
$v\in V(G)$, the sets $\mathcal{D}(v)$ can be computed in a total of
$O(|E(G)|)$ time. Hence, we have the following lemma.

\begin{lemma}\label{lem:main-gamma}
    For a root vertex $r$ and source vertex $x$, for each vertex $v\in V(G)$, the value $\gamma^r(x,v)$ can be computed in $O(|E(G)|)$ time. 
\end{lemma}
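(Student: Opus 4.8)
The plan is to set up a dynamic program over the vertices of $G$, evaluated in increasing order of distance from the fixed source $x$, and to charge its cost to the edges of $G$. The raw material is the collection of recurrences already established: Observation~\ref{lem:beta-arrow} for $\beta_{\searrow}(v)$ and $\beta_{\nearrow}(v)$, Lemma~\ref{lem:gamma-rightarrow} for $\gamma_{\rightarrow}(v)$, Lemma~\ref{lem:gamma-searrow} for $\gamma_{\searrow}(v)$, and Lemma~\ref{lem:gamma-nearrow} for $\gamma_{\nearrow}(v)$, together with the identity $\gamma(v)=\max\{\gamma_{\searrow}(v),\gamma_{\rightarrow}(v),\gamma_{\nearrow}(v)\}$. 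Bundling these six quantities into the tuple $\mathcal{D}(v)$, I would observe that the first five entries of $\mathcal{D}(v)$ are determined entirely by tuples $\mathcal{D}(u)$ with $u$ ranging over a restricted neighborhood of $v$, and that $\gamma(v)$ is then obtained from the five subscripted entries already computed at $v$; thus each $\mathcal{D}(v)$ is computed from strictly earlier data with no internal circularity.

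The key structural point that makes the recursion well-founded is the identification of that restricted neighborhood. Every vertex $u$ on the right-hand side of the cited formulas lies in $I(x,v)\cap N(v)$. Since such a $u$ is adjacent to $v$ we have $\dist{u}{v}=1$, so $u\in I(x,v)$ holds precisely when $\dist{x}{u}=\dist{x}{v}-1$; hence $I(x,v)\cap N(v)=\{u\in N(v):\dist{x}{u}=\dist{x}{v}-1\}$, i.e. exactly the neighbors of $v$ lying one BFS layer closer to $x$. Writing $X_i$ for the set of vertices at distance $i$ from $x$, this says that computing $\mathcal{D}(v)$ for $v\in X_i$ consults only the already-computed tuples $\mathcal{D}(u)$ for $u\in N(v)\cap X_{i-1}$, with no dependency inside a layer or on farther layers. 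Consequently, processing $X_0,X_1,\dots$ in increasing order is a legitimate evaluation schedule, with base case $\mathcal{D}(x)$ fixed in constant time by the conventions $\gamma(x)=\gamma_{\searrow}(x)=\gamma_{\rightarrow}(x)=\gamma_{\nearrow}(x)=1$ and $\beta_{\searrow}(x)=\beta_{\nearrow}(x)=0$.

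For the running time, I would first run one BFS from $r$ to tabulate all distances $\dist{r}{w}$ and one BFS from $x$ to produce the layers $X_i$, each in $O(|E(G)|)$ time. After this preprocessing, the distance tests $\dist{r}{u}\in\{\dist{r}{v}-1,\dist{r}{v},\dist{r}{v}+1\}$ that select the relevant terms in each formula cost $O(1)$ per neighbor, so evaluating $\mathcal{D}(v)$ takes $O(|N(v)\cap X_{i-1}|)=O(\deg(v))$ time. Summing over all vertices yields $\sum_{v}O(\deg(v))=O(|E(G)|)$, which dominates the preprocessing; and since $\gamma^r(x,v)=\gamma(v)$ is a component of $\mathcal{D}(v)$, every value $\gamma^r(x,v)$ is produced within this budget.

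I do not expect a genuine obstacle, since the conceptual content lives entirely in the previously proved recurrences. The only points demanding care are verifying the neighborhood characterization $I(x,v)\cap N(v)=\{u\in N(v):\dist{x}{u}=\dist{x}{v}-1\}$, which is what guarantees that the dependency structure of the recursion respects the BFS layering, and the amortized degree-sum argument establishing the $O(|E(G)|)$ bound.
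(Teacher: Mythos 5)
Your proposal is correct and matches the paper's proof essentially verbatim: the same bundling of the six quantities into $\mathcal{D}(v)$, the same BFS layering $X_0,X_1,\dots$ from the source $x$, and the same observation that the recurrences of Observation~\ref{lem:beta-arrow} and Lemmas~\ref{lem:gamma-rightarrow}--\ref{lem:gamma-nearrow} only consult $\mathcal{D}(u)$ for $u\in N(v)\cap X_{i-1}$, giving $O(|E(G)|)$ total time by a degree-sum argument. If anything, you are slightly more explicit than the paper, which leaves implicit both the characterization $I(x,v)\cap N(v)=\{u\in N(v):\dist{x}{u}=\dist{x}{v}-1\}$ and the auxiliary BFS from $r$ for $O(1)$ distance tests.
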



We can now finish the proof of Theorem~\ref{thm:ipcoInP}.  Let $G$ be
a graph with $n$ vertices and $m$ edges. For a root vertex $r$, by
applying Lemma~\ref{lem:main-gamma}, for every source $x \in V(G)$, it
is possible to compute
$\ipco{\overrightarrow{G_r}} = \max_{x,v} \gamma^r(x,v)$ in $O(nm)$
time. By repeating this for every root $r\in V(G)$, it is possible to
compute $\ipco{G} = \min_r \ipco{\overrightarrow{G_r}}$ in $O(n^2 m)$
time.

\section{Proof of Theorem~\ref{thm:main}} \label{sec:main}

\newcommand{\Gromov}[3]{\left(#1|#2\right)_{#3}}
\textbf{First we prove Theorem~\ref{thm:main}\ref{thm:hyperbolicity}.} We recall the definition of Gromov products~\cite{gromov1987} and its relation with hyperbolicity. For three vertices $r,x,y$ of a graph $G$, the Gromov product of $x,y$ with respect to $r$ is defined as $\Gromov{x}{y}{r} = \frac{1}{2}\left(\dist{x}{r} + \dist{y}{r} - \dist{x}{y}\right)$.
Then, a graph $G$ is $\delta$-hyperbolic~\cite{chepoi2019fast,gromov1987} if and only if for any four vertices $x,y,z,r$, we have $\Gromov{x}{y}{r} \geq \min\left\{ \Gromov{x}{z}{r}, \Gromov{y}{z}{r} \right\} - \delta$.


Let $G$ be a graph with hyperbolicity at most $\delta$. Due to
Lemma~\ref{lem:ipac-ipco}, in order to prove
Theorem~\ref{thm:main}\ref{thm:hyperbolicity}, it is enough to show
that $\ipac{G} \leq 4\delta+3$.
Aiming {for a} contradiction, let $r$ be a vertex of $G$ and $P$ be an
isometric path such that $|\anticp{r}{P}|\geq 4\delta+4$. Let
$a_1, a_2, \ldots, a_{2\delta+2}, \ldots, a_{4\delta+4}$ be the
vertices of $\anticp{r}{P}$ ordered as they are encountered while
traversing $P$ from one endpoint to the other. Let
$x=a_1, z=a_{2\delta+2}, y=a_{4\delta+4}.$ Let $Q$ denote the
$(y,z)$-subpath of $P$. Observe that,
$|\anticp{r}{Q}| \geq 2\delta+2$.  Then we have 
$ \Gromov{x}{y}{r} \geq \min\left\{ \Gromov{x}{z}{r}, \Gromov{y}{z}{r}
\right\} - \delta$. Without loss of generality, assume that
$\Gromov{x}{z}{r} \leq \Gromov{y}{z}{r}$. Hence,
\begin{align*}
  \Gromov{x}{y}{r} & \geq \Gromov{x}{z}{r} - \delta \\
  \dist{x}{r} + \dist{y}{r} - \dist{x}{y} & \geq \dist{x}{r} + \dist{z}{r} - \dist{x}{z} - 2\delta\\
  \dist{y}{r} - \dist{x}{y} & \geq \dist{z}{r} - \dist{x}{z} - 2\delta\\
  \dist{y}{r} - \dist{z}{r} + 2\delta & \geq \dist{x}{y} - \dist{x}{z}\\
  \dist{y}{r} - \dist{z}{r} + 2\delta & \geq \dist{y}{z}\\
  \dist{y}{z} & \leq \left|\dist{y}{r} - \dist{z}{r}\right| + 2 \delta.
\end{align*}

But this directly contradicts Proposition~\ref{prp:antichain-length}, which implies that $\dist{y}{z} \geq \left|\dist{y}{r} - \dist{z}{r}\right| + \left|\anticp{r}{Q}\right| - 1 \geq \left|\dist{y}{r} - \dist{z}{r}\right| + 2\delta + 1 $. This completes the proof of Theorem~\ref{thm:main}\ref{thm:hyperbolicity}.

\medskip\noindent
\textbf{Now, we shall prove Theorems~\ref{thm:main}\ref{thm:truemper} and \ref{thm:main}\ref{thm:outer}.} First, we shall define the notions of \emph{$t$-theta}, \emph{$t$-prism}, and \emph{$t$-pyramid}~\cite{trotignonprivate}. For an integer $t\geq 1$, a \emph{$t$-prism} is a graph made of three vertex-disjoint induced paths $P_1 = a_1\ldots b_1$, $P_2 = a_2\ldots b_2$, $P_3 = a_3\ldots b_3$ of lengths at least $t$, such that $a_1 a_2 a_3$ and $b_1 b_2b_3$ are triangles and no edges exist between the paths except those of the two triangles. For an integer $t\geq 1$, a \emph{$t$-pyramid} is a graph made of three induced paths $P_1 = a\ldots b_1$, $P_2 = a\ldots b_2$, $P_3 = a\ldots b_3$ of lengths at least $t$, two of which
 have lengths at least $t+1$, {they are pairwise} vertex-disjoint except at $a$, such that $b_1 b_2 b_3$ is a triangle and no edges exist between the paths except those of the triangle and the three edges incident to $a$. For an integer $t\geq 1$, a \emph{$t$-theta} is a graph made of three {internally} vertex-disjoint induced paths $P_1 = a\ldots b$, $P_2 = a\ldots b$, $P_3 = a\ldots b$ of lengths at least $t+1$, and such that no edges exist between the paths except the three edges incident to $a$ and the three edges incident to $b$. A graph $G$ is \emph{($t$-theta, $t$-pyramid, $t$-prism)}-free if $G$ does not contain any induced subgraph isomorphic to a $t$-theta, $t$-pyramid or $t$-prism. When $t=1$, ($t$-theta, $t$-pyramid, $t$-prism)-free graphs are exactly (theta, prism, pyramid)-free graphs.

Now, we shall show that the isometric path antichain cover number of ($t$-theta, $t$-pyramid, $t$-prism)-free graphs are bounded above by a linear function on $t$. We shall show that, when the isometric path antichain cover number of a graph is large, the existence of a structure called ``$t$-fat turtle'' (defined below) as an induced subgraph is forced, which, cannot be present in a ($(t-1)$-theta, $(t-1)$-pyramid, $(t-1)$-prism)-free graph. 


\begin{figure}
    \centering
    
    \begin{tikzpicture}[scale=0.8]

    \filldraw (-1,1) circle (2pt);
    \filldraw (-1,-1) circle (2pt);
    \filldraw (-1.35,0) circle (2pt);
    
    \draw[thick] (-1,1) -- (-1.35,0) -- (-1,-1);
    \draw[thick] (0,0) -- (-1,1);
    \draw[thick] (0,0) -- (-1,-1);
    \draw[thick] (5,1) -- (-1,1);
    \filldraw (1,1) circle (2pt);
    \filldraw (2,1) circle (2pt);
    \filldraw (3,1) circle (2pt);
    
    \filldraw (0.5,-1) circle (2pt);
    \filldraw (1.5,-1) circle (2pt);
    \filldraw (2.5,-1) circle (2pt);
    \filldraw (3.5,-1) circle (2pt);
    
    \filldraw (5,1) circle (2pt);
    \filldraw (5,-1) circle (2pt);
    \filldraw (5.35,0) circle (2pt);
    
    \draw[thick] (5,1) -- (5.35,0) -- (5,-1);
    \draw[thick] (4,0) -- (5,1);
    \draw[thick] (4,0) -- (5,-1);
    \draw[thick] (4,0) -- (5.35,0);
    
    \draw[thick] (5,-1) -- (-1,-1);
    
    \filldraw (4,0) circle (2pt);
    
    \draw[thick] (0,0) -- (4,0);
        
    \draw[fill=white, draw=black] (2,0) circle (2pt); 
    \draw[fill=white, draw=black] (3,0) circle (2pt);
    \draw[fill=white, draw=black] (0,0) circle (2pt);\node[above] at (0,0.1) {$u$};
    \draw[fill=white, draw=black] (1,0) circle (2pt);
    \draw[fill=white, draw=black] (4,0) circle (2pt);\node[above] at (4,0.1) {$v$};

    \node[above] at (0.5,-0.9) {$c$};
    \node[below] at (1,0.9) {$c'$};

    \end{tikzpicture}
    \caption{An example of a $4$-fat turtle. Let $C$ be the cycle induced by the black vertices, $P$ be the path induced by the white vertices. Then the tuple $(4,C,P,c,c')$ {defines} a $4$-fat turtle.
    }
    \label{fig:turtle}
\end{figure}

\begin{definition}
For an integer $t\geq 1$, a ``$t$-fat turtle'' consists of a cycle $C$ and an induced $(u,v)$-path $P$ of length at least $t$ 
such that all of the following hold: 
\begin{enumerate}[label=(\alph*)]

\item $V(P) \cap V(C) = \emptyset$,

\item For any vertex $w\in (V(P)\setminus \{u,v\})$, $N(w) \cap V(C) = \emptyset$ and both $u$ and $v$ have at least one neighbour in $C$,

\item For any vertex $w\in N(u) \cap V(C)$ and $w'\in N(v)\cap V(C)$, the distance between $w$ and $w'$ in $C$ is at least $t$,

\item There exist two vertices $\{c,c'\}\subset V(C)$ and two distinct components $C_u,C_v$ of $C-\{c,c'\}$ such that $N(u) \cap V(C) \subseteq V(C_u)$ and $N(v) \cap V(C) \subseteq V(C_v)$.

\end{enumerate}

The tuple $(t,C,P,c,c')$ denotes the $t$-fat turtle. See Figure~\ref{fig:turtle} for an example.
\end{definition}

In the following observation, we show that any ($t$-theta, $t$-pyramid,$t$-prism)-free graph cannot contain a $(t+1)$-fat turtle as an induced subgraph.

\begin{lemma}\label{lem:fat-turtle}
For some integer $t\geq 1$, let $G$ be a graph containing a $(t+1)$-fat turtle as an induced subgraph. Then $G$ is not ($t$-theta, $t$-pyramid, $t$-prism)-free.  
\end{lemma}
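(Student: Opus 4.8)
The plan is to exhibit an induced $t$-theta, $t$-pyramid, or $t$-prism inside the $(t+1)$-fat turtle $(t+1,C,P,c,c')$, the type depending only on how the ends $u,v$ of the path $P$ attach to the cycle. By condition (d), removing $c,c'$ from $C$ leaves exactly two arcs, one containing $N(u)\cap V(C)$ (this is $C_u$) and one containing $N(v)\cap V(C)$ (this is $C_v$); restoring $c$ and $c'$ gives two internally disjoint arcs of $C$ joining the $C_u$-side to the $C_v$-side, one through $c$ and one through $c'$. First I would record that, since $P$ is induced and by condition (b) no internal vertex of $P$ touches $C$, the only edges between $V(P)$ and $V(C)$ run from $u$ into $C_u$ and from $v$ into $C_v$. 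I would also assume that $C$ is chordless: this is the essential case, and it is what lets the two arcs serve as induced paths meeting each other only along the cycle.

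Next I would choose, among the neighbours of $u$ on $C_u$, the one closest to $c$ (call it $a$) and the one closest to $c'$ (call it $\bar a$), and define $b,\bar b$ analogously for $v$ on $C_v$. This choice guarantees that the sub-arc $Q_c$ from $a$ to $b$ through $c$ meets $N(u)$ only in $a$ and $N(v)$ only in $b$ (every vertex strictly between $a$ and $c$ is closer to $c$ than $a$, hence is not a neighbour of $u$, and symmetrically at the far end), and likewise the sub-arc $Q_{c'}$ from $\bar a$ to $\bar b$ through $c'$ meets $N(u)$ only in $\bar a$ and $N(v)$ only in $\bar b$. By condition (c) for the $(t+1)$-fat turtle, the $C$-distance between any neighbour of $u$ and any neighbour of $v$ is at least $t+1$, so both arcs $Q_c,Q_{c'}$ have length at least $t+1$, while $P$ has length at least $t+1$ by hypothesis. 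These three long induced paths, glued at the two ends, are the three ``sides'' of the configuration.

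The core is then a short case analysis on the local structure at $u$ and, symmetrically, at $v$. At the $u$-end there are three possibilities: $a=\bar a$ (a single attachment); $a\neq\bar a$ with $a\bar a\notin E(G)$; or $a\neq\bar a$ with $a\bar a\in E(G)$. In the first two cases the three paths $Q_c,Q_{c'},P$ emanate from a single vertex of the configuration ($a$ in the first case, $u$ itself in the second), which plays the role of a branch vertex; in the third case $\{u,a,\bar a\}$ is a triangle. Carrying out the same trichotomy at the $v$-end, I expect: a branch-type end at both sides gives a $t$-theta (two branch vertices joined by the three internally disjoint induced paths $Q_c,Q_{c'}$ and $P$, suitably extended by the edges $ua,vb,\dots$); a branch-type end on one side and a triangle on the other gives a $t$-pyramid (apex the branch vertex, triangle the other end, the three paths running from apex to the three triangle vertices); and a triangle at both ends gives a $t$-prism (the two triangles $\{u,a,\bar a\}$ and $\{v,b,\bar b\}$ joined by $P$, $Q_c$ and $Q_{c'}$). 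In every case each of the three paths has length at least $t+1$, which meets the length requirements of the $t$-versions with room to spare.

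The main obstacle is verifying that the subgraph produced is \emph{induced} with exactly the prescribed adjacencies, i.e. that there are no stray chords. The two sources of spurious edges are extra neighbours of $u$ or $v$ on the arcs, and chords of $C$. The first source is controlled by the ``closest neighbour'' choice of $a,\bar a,b,\bar b$ together with conditions (a), (b), (d), which confine $N(u)$ to $C_u$ and $N(v)$ to $C_v$ and keep the interior of $P$ off the cycle, so that on each arc $u$ (resp. $v$) sees only its designated endpoint; the vertices of $C_u$ strictly between $a$ and $\bar a$, and of $C_v$ between $b$ and $\bar b$, are simply discarded. The second source is precisely why the reduction to a chordless cycle has to be handled with care. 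That reduction, together with keeping the bookkeeping of the three cases straight — in particular deciding which vertex is the branch vertex or apex when an end is a single attachment rather than a genuine two-footed branch — is where the real work lies.
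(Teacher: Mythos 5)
Your proposal is correct and follows essentially the same route as the paper's proof: the paper likewise fixes the extreme attachment vertices $a_{i^-},a_{i^+}$ of $u$ and $a_{j^-},a_{j^+}$ of $v$ (your $a,\bar a,b,\bar b$), takes the two arcs of $C$ through $c$ and $c'$ between them as the paths $P_1,P_2$, discards the intermediate attachments, and runs the same trichotomy at each end (single attachment / non-adjacent pair / adjacent pair) to exhibit a $t$-theta, $t$-prism or $t$-pyramid, with the length bounds coming from condition (c) exactly as you argue. Your residual worry about chords of $C$ is moot, since the fat turtle is given \emph{as an induced subgraph} (so $C$ is chordless and the only edges among $V(C)\cup V(P)$ are the prescribed ones), and if anything your case bookkeeping is slightly more careful than the paper's, whose prose files the mixed ``single attachment at one end, non-adjacent pair at the other'' case under pyramid although it actually yields a theta.
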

 \begin{proof}
 Let $(t+1,C,P,c,c')$ be a $(t+1)$-fat turtle in $G$. Let the vertices of $C$ be named $c=a_0, a_1, \ldots, a_k=c', a_{k+1},\ldots, a_{|V(C)|}$ as they are encountered while traversing $C$ starting from $c$ in a counter-clockwise manner. Denote by
 $u,v$ the endpoints of $P$. By definition, there exist two distinct components $C_u,C_v$ of $C-\{c,c'\}$ such that $N(u) \cap V(C) \subseteq V(C_u)$ and $N(v) \cap V(C) \subseteq V(C_v)$. Without loss of generality, assume $V(C_u) = \{a_1, a_2, \ldots, a_{k-1}\}$ and $V(C_v) = \{a_{k+1}, a_{k+2}, \ldots, a_{|V(C)|}\}$. Let $i^-$ and $i^+$ be the minimum and maximum indices such that $a_{i^-}$ and $a_{i^+}$ are adjacent to $u$. Let $j^-$ and $j^+$ be the minimum and maximum indices such that $a_{j^-}$ and $a_{j^+}$ are adjacent to $v$. By definition, $i^-\leq i^+ < j^-\leq j^+$. Let $P_1$ be the $(a_{i^-},a_{j^+})$-subpath of $C$ containing $c$. Let $P_2$ be the $(a_{i^+},a_{j^-})$-subpath of $C$ that contains $c'$. Observe that $P_1$ and $P_2$ have length at least $t$ (by definition). Now we show that $P,P_1,P_2$ together form one of theta, pyramid or prism. If $a_{i^-} = a_{i^+}$ and $a_{j^-} = a_{j^+}$, then $P,P_1,P_2$ form a $t$-theta. If $i^-\leq i^+-2$ and $j^-\leq j^+-2$, then also $P,P_1,P_2$ form a $t$-theta. If $j^-= j^+-1$ and $i^-= i^+-1$, then $P,P_1,P_2$ form a $t$-prism. In any other case, $P,P_1,P_2$ form a $t$-pyramid.
 \end{proof}

In the remainder of this section, we shall prove that there exists a linear function $f(t)$ such that if the isometric path antichain cover number of a graph is more than $f(t)$, then $G$ is forced to contain a $(t+1)$-fat turtle as an induced subgraph, and therefore is not ($t$-theta, $t$-pyramid,$t$-prism)-free. We shall use the following observation.



\begin{observation}\label{obs:anti-cp-reduc}
\sloppy Let $G$ be a graph, $r$ be an arbitrary vertex, $P$ be an isometric $(u,v)$-path in $G$ and $Q$ be a subpath of an isometric $(v,r)$-path in $G$ such that one endpoint of $Q$ is $v$. Let $P'$ be the maximum $(u,w)$-subpath of $P$ such that no internal vertex of $P'$ is a neighbour of some vertex of $Q$. We have that $|\anticp{r}{P'}| \geq |\anticp{r}{P}| - 3$. 
\end{observation}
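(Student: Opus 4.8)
The plan is to reduce the whole statement to a single quantitative fact about the ``tail'' of $P$ that gets removed when passing from $P$ to $P'$. Write $P = (u = p_0, p_1, \ldots, p_\ell = v)$ and let $w = p_j$ be the far endpoint of $P'$, so that $P' = (p_0,\ldots,p_j)$ and the removed part is the subpath $T = (p_j,\ldots,p_\ell)$ from $w$ to $v$. By the maximality in the definition of $P'$, if $j < \ell$ then the vertex $w$ must be adjacent to some vertex $q_0$ of $Q$ (otherwise we could append $p_{j+1}$ to $P'$ without creating an internal neighbour of $Q$). If the tail is empty ($w=v$), the statement is trivial, so I assume $w$ is adjacent to some $q_0 \in Q$.

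First I would prove the key inequality $|\anticp{r}{T}| \le 3$. Since $Q$ is a subpath of an isometric $(v,r)$-path with endpoint $v$, every prefix of $Q$ is a shortest path; writing $q_0$ at position $m$ along $Q$ gives $\dist{v}{q_0} = m$ and $\dist{r}{q_0} = \dist{r}{v} - m$. Using the edge $wq_0$ and the triangle inequality, I obtain the two estimates $\dist{w}{v} \le \dist{w}{q_0} + \dist{q_0}{v} = 1 + m$ and $\dist{r}{w} \le \dist{r}{q_0} + 1 = \dist{r}{v} - m + 1$; the latter gives $|\dist{r}{w} - \dist{r}{v}| \ge \dist{r}{v} - \dist{r}{w} \ge m - 1$. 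Since $T$ is an isometric $(w,v)$-path (a subpath of the isometric path $P$), its length is $|T| = \dist{w}{v} \le 1 + m$, and Proposition~\ref{prp:antichain-length} applied to $T$ yields $|\anticp{r}{T}| \le |T| - |\dist{r}{w} - \dist{r}{v}| + 1 \le (1+m) - (m-1) + 1 = 3$.

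Finally I would transfer this bound to $P'$. Let $A$ be a maximum antichain of $P$ in $\overrightarrow{G_r}$, so $|A| = |\anticp{r}{P}|$, and split it as $A = A' \cup A_T$ with $A' = A \cap V(P')$ and $A_T = A \cap (V(P)\setminus V(P'))$. Because $V(P)\setminus V(P') = \{p_{j+1},\ldots,p_\ell\} \subseteq V(T)$, the set $A_T$ is an antichain contained in $T$, whence $|A_T| \le |\anticp{r}{T}| \le 3$; and $A'$ is an antichain contained in $P'$, whence $|A'| \le |\anticp{r}{P'}|$. Combining, $|\anticp{r}{P'}| \ge |A'| = |A| - |A_T| \ge |\anticp{r}{P}| - 3$, which is exactly the desired conclusion.

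The only genuine obstacle is isolating the correct tail $T$ and establishing $|\anticp{r}{T}| \le 3$; once the two triangle-inequality estimates on $\dist{w}{v}$ and $\dist{r}{w}$ are in hand, Proposition~\ref{prp:antichain-length} does all the work, and the passage to $P'$ is immediate because any antichain restricts to an antichain on any subpath. The estimates above already cover the degenerate case $m=0$ (i.e.\ $w$ adjacent to $v$ itself), since then $|T| = \dist{w}{v} \le 1$ and the tail has at most two vertices, so the bound $|\anticp{r}{T}|\le 3$ holds trivially.
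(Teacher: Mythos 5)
Your proof is correct and takes essentially the same route as the paper's: both isolate the tail $(w,v)$-subpath of $P$, use the edge from $w$ to a vertex of $Q$ together with the fact that $Q$ lies on an isometric $(v,r)$-path ending at $v$ to bound $\dist{w}{v}\leq \left|\dist{r}{w}-\dist{r}{v}\right|+2$, and then apply Proposition~\ref{prp:antichain-length} to that tail. The only difference is presentational: the paper argues by contradiction (assuming the tail carries an antichain of size at least $4$), whereas you prove $|\anticp{r}{T}|\leq 3$ directly and split a maximum antichain of $P$ between $P'$ and the tail, which is the same counting made explicit.
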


 \begin{proof}
 Suppose $|\anticp{r}{P'}| \leq |\anticp{r}{P}| - 4$ and consider the $(w,v)$-subpath, say $P''$, of $P$. Observe that $|\anticp{r}{P''}| \geq 4$. Now let $w'$ be a vertex of $Q$ which is a neighbour of $w$. Observe that $|\dist{r}{w} - \dist{r}{w'}| \leq 1$ and therefore $\dist{w}{v} = |E(P'')| \leq |\dist{r}{w} - \dist{r}{v}| + 2 $. But this contradicts Proposition~\ref{prp:antichain-length}, which implies that the length of $P''$ is at least $|\dist{r}{w} - \dist{r}{v}| + 3$.
 \end{proof}


 
    
     



\newcommand{\vertical}[2]{Q\left({#1},{#2}\right)}

\begin{figure}[t]
\centering
    \scalebox{0.9}{
    \centering
    \begin{tikzpicture}
    
    \filldraw (3.5,4) circle (2pt);
    \node[above] at (3.5,4) {$r$};
    \filldraw (3.5,2) circle (2pt);
    \filldraw (5,2.5) circle (2pt);
    \filldraw (2,2.5) circle (2pt);
    \node[left] at (2,2.6) {$z_2$};
    \node[above] at (5.15,2.45) {$w_2$};

    \draw[thick] (2,2.5) -- (5,2.5);
    \draw[thick] (3.5,2) -- (5,2.5);

    \filldraw (0,0) circle (2pt);
    \node[below] at (0,0) {$u$};
    
    \filldraw (0.5,0) circle (2pt);
    \filldraw (1,0) circle (2pt);
    \node[right] at (1.1,0) {$z$};
    
    \draw[thick] (0,0) -- (1,0);
    \draw[thick] (0,0) -- (0.5,1) -- (1,0);

    \filldraw (0.5,1) circle (2pt);
    \node[left] at (0.5,1) {$z_1$};

    \filldraw (6,0) circle (2pt);
    \node[left] at (6,0) {$w$};
    \filldraw (6.5,0) circle (2pt);
    \filldraw (7,0) circle (2pt);
    \node[right] at (7,0) {$b$};
    
    \draw[thick] (6,0) -- (7,0);
    \node[right] at (6.5,1) {$w_1$};
    \draw[thick] (6,0) -- (6.5,1) -- (7,0);
    
    \filldraw (6.5,1) circle (2pt);
    \filldraw (3.5,-1.5) circle (2pt);

    \filldraw (3,-2) circle (2pt);
    \filldraw (3.5,-2) circle (2pt);
    \filldraw (4,-2) circle (2pt);
    \node[below] at (3,-2) {\begin{tabular}{c}
         $c$ \\ $(= a^{}_{2t+13})$ 
    \end{tabular}};
    \node[below] at (4,-2) {\begin{tabular}{c} $x$ \end{tabular}};
    
    \draw[thick] (3,-2) -- (4,-2);
    
    \draw[thick] (3,-2) -- (3.5,-1.5) -- (4,-2) ;
    \node[right] at (3.3,-1.5) {\begin{tabular}{c}
         $c_1$ 
    \end{tabular}};
    
    \filldraw (-4,-2) circle (2pt);
    \node[below] at (-4,-2) {$a$};
    \node[right] at (3.3,1.95) {\begin{tabular}{c}
         $c_2$ 
    \end{tabular}};
    
    \node[right] at (3.3,0) {\begin{tabular}{c}
         \scriptsize $T(c_1,c_2)$ 
    \end{tabular}};
    \node[left] at (3.6,0) {\begin{tabular}{c}
         \scriptsize $\geq t$ 
    \end{tabular}};

    \node[left] at (6.1,-1) {\begin{tabular}{c}
         \scriptsize $\geq t$ 
    \end{tabular}};

    \node[left] at (3.1,-1) {\begin{tabular}{c}
         \scriptsize $\geq t$ 
    \end{tabular}};
    
    \path [draw=black, very thick,snake it] (-4,-2) -- (0,0);
    \path [draw=black, very thick,snake it] (1,0) -- (3,-2);
    \path [draw=black, very thick,snake it] (6,0) -- (4,-2);
    \path [draw=black, thick,snake it] (0.5,1) -- (3.5,4);
    \path [draw=black, thick,snake it] (6.5,1) -- (3.5,4);
    \path [draw=black, thick, snake it] (3.5,-1.5) -- (3.5,2) -- (3.5,4);
    
    \node[right] at (5.5,2) {\begin{tabular}{c}
          $\vertical{r}{b}$ 
    \end{tabular}};
    
    \node[right] at (1, 1.6) {\begin{tabular}{c}
         $\vertical{r}{u}$ 
    \end{tabular}};
    
    \end{tikzpicture}}
    \caption{Illustration of the notations used in the proof of Lemma~\ref{lem:ipac-fat}. The thick wiggly line along with the horizontal edges indicate the $(a,b)$-isometric path $P$. }\label{fig:proof-illustrate}
\end{figure}

\begin{lemma}\label{lem:ipac-fat}
For an integer $t\geq 1$, let $G$ be a graph with $\ipac{G}\geq 8t+64$. Then $G$ has a $(t+1)$-fat turtle {as an induced subgraph}.
\end{lemma}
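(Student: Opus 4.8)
The plan is to build the turtle directly, as depicted in Figure~\ref{fig:proof-illustrate}, from a worst-case root together with a widest isometric path. Since $\ipac{G}=\min_{r}\max_{P}|\anticp{r}{P}|\ge 8t+64$, I fix an arbitrary root $r$ and choose an isometric $(a,b)$-path $P$ with $|\anticp{r}{P}|\ge 8t+64$. Proposition~\ref{prp:antichain-length} already forces $P$ to be long, $|P|\ge 8t+63$, and since $\dist{a}{b}\le\dist{a}{r}+\dist{r}{b}$, the vertex $c$ of $P$ maximising $\dist{r}{c}$ satisfies $\dist{r}{c}\ge |P|/2\ge 4t+31$. This ``deepest'' vertex $c$ is the anchor of the whole construction: the pendant path of the turtle will be (a segment of) an isometric $(c,r)$-path, and the large value of $\dist{r}{c}$ is exactly what guarantees that the pendant --- and the two cycle-arcs leading from $c$ back up to $r$ --- all have length well above $t$.

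I would form the cycle $C$ by closing $P$ up through the root: from two vertices $u,b$ of $P$ (the trimmed ends) I run isometric paths $\vertical{r}{u}$ and $\vertical{r}{b}$ to $r$ (the ``legs''), so that $C$ consists of the subpath of $P$ between $u$ and $b$ together with the two legs, glued at their first common vertex near $r$. The pendant is the ``vertical rib'': starting at $c$ and climbing an isometric $(c,r)$-path, it leaves $P$ immediately (its lower end $c_1$ is the neighbour of $c$ towards $r$) and is induced because it is a shortest path; its upper end $c_2$ is the first rib-vertex having a neighbour on a leg, where it re-attaches to $C$. Since every vertex of $P$ is at distance $\le\dist{r}{c}$ from $r$ while the two arcs of $C$ running from $c$ up to $r$ have length at least $\dist{u}{c}+\dist{u}{r}\ge\dist{r}{c}$ and $\dist{c}{b}+\dist{b}{r}\ge\dist{r}{c}$ respectively, both arcs exceed $t$, which will give condition (c).

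The four turtle conditions are then verified with the two tools at hand. The non-adjacency in (b) --- no internal vertex of the rib has a neighbour on $C$ --- I obtain from Observation~\ref{obs:anti-cp-reduc}: applying it to each half of $P$ with $Q$ taken to be the rib trims off the part of $P$ lying close to the rib, at a cost of at most $3$ antichain vertices per half, so that after trimming the interior of the rib has no neighbour on the $P$-part of $C$; the remaining possibility, that the rib meets a leg too early, is controlled by the distance-to-$r$ bookkeeping, which also pins down the re-attachment vertex $c_2$ and forces the used portion of the rib to have length at least $t+1$. Disjointness (a) is immediate once (b) holds. The length of the rib and the arc-distance (c) come from the bound $\dist{r}{c}\ge 4t+31$ above, while (d) is obtained by deleting two vertices of $P$ flanking $c$: this separates the small arc of $P$ carrying the lower attachment of the rib from the large arc carrying its near-$r$ attachment, placing the two ends of the rib in different components of $C$.

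The hard part, I expect, is keeping $C$ a genuinely induced cycle and keeping the rib disjoint from it with exactly the prescribed attachment pattern. Two isometric paths to $r$ may share a long final portion, so the legs must be truncated at their first common vertex and one must check that the resulting top arc has no chord; one must likewise rule out chords between a leg and the $P$-part of $C$ (handled by the trimming of Observation~\ref{obs:anti-cp-reduc}) and, most delicately, between the rib and the legs, which is where the distances to $r$ along all four objects ($P$, the two legs, and the rib) must be played against one another to ensure the rib does not re-attach before climbing $t+1$ steps. Propagating the loss of $3$ antichain vertices at each trimming step, together with the several length requirements of order $t$, is what sets the final constant at $8t+64$; I expect this accounting, rather than any single inequality, to be the technical core of the argument.
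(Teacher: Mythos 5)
Your blueprint is the same as the paper's (a root $r$, a wide isometric path $P$, two legs to $r$ closing $P$ into a cycle, a ``vertical rib'' toward $r$ as the pendant, trimming via Observation~\ref{obs:anti-cp-reduc}, lengths via Proposition~\ref{prp:antichain-length}), but two of your concrete choices break the construction. First, the anchor: you hang the rib at the vertex $c$ of $P$ maximising $\dist{r}{c}$ and argue $\dist{r}{c}\geq |P|/2$. The bound is correct, but it is attained exactly when $c$ is an \emph{endpoint} of $P$ (e.g.\ when $\dist{r}{\cdot}$ is monotone along $P$), and in that case your rib is a path running parallel to --- possibly equal to --- one of the legs: its foot sits at the junction of the $P$-arc and a leg, one of the two cycle-arcs between the attachment zones is short or empty, and conditions (c) and (d) of the fat-turtle definition fail. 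More fundamentally, depth from $r$ controls neither the length of the rib that \emph{survives} trimming nor the cycle-distance between its two attachment zones; what controls both is antichain mass on \emph{both sides} of the rib's foot. The paper therefore splits $P$ at the vertex $u$ \emph{minimising} $\dist{r}{\cdot}$ (not maximising), trims twice against the legs to obtain $\Pnote{z}{w}$ with $|\anticp{r}{\Pnote{z}{w}}|\geq 4t+26$, and anchors the rib at the \emph{median antichain vertex} $c=a_{2t+13}$. That median choice yields Claim~\ref{clm:life-saver} (at least $2t+11$ antichain vertices on each side of any foot-neighbour $x$), which through Proposition~\ref{prp:antichain-length} simultaneously forces the free segment of the rib to have length at least $t+3$ (Claim~\ref{clm:vacant-path}) and the two attachment zones to be at distance at least $t+1$ along the cycle. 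Your proposal defers precisely these two statements to ``distance-to-$r$ bookkeeping''; no such bookkeeping follows from $\dist{r}{c}\geq 4t+31$ alone, and this is the technical core of the proof, not an accounting detail.

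Second, the trimming direction is backwards. You apply Observation~\ref{obs:anti-cp-reduc} to each half of $P$ with $Q$ equal to the rib, ``trimming off the part of $P$ lying close to the rib'' to secure condition (b). But the Observation keeps the subpath \emph{far} from $Q$: applied to $\Pnote{a}{c}$ and $\Pnote{c}{b}$ it discards a whole middle zone of $P$ around the rib's foot, so the two surviving pieces of $P$ are disconnected, and together with the two legs they form a path, not a cycle --- you cannot close the cycle without re-using either the discarded zone (whose vertices are adjacent to the rib, violating (b)) or the rib itself (which is the pendant). The paper trims the other way: $P$ is trimmed only against the two legs (this is the actual use of Observation~\ref{obs:anti-cp-reduc}, twice, at a cost of $3$ antichain vertices each), while the \emph{rib} is trimmed against the path and then against the legs, keeping the segment $T(r,c_1)$ from $r$ down to the last vertex $c_1$ adjacent to $\Pnote{z}{w}$ and then $T(c_1,c_2)$. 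A smaller gap of the same kind occurs at the top of your cycle: gluing the legs ``at their first common vertex'' does not make the top arc induced; the paper needs the mutual truncations $T(w_1,w_2)$ and $T(z_1,z_2)$ ending in the edge $z_2w_2$, plus Claim~\ref{clm:final} that $z_1w_1$ is a non-edge. So the strategy is right, but as written the construction degenerates at the anchor, fails to produce a cycle because of the trimming direction, and omits the two quantitative claims on which conditions (b)--(d) rest.
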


\begin{proof}

Let $r$ be a vertex of $G$ such that $\ipac{\overrightarrow{G_r}}$ is at least $8t+64$. Then there exists an isometric path $P$ such that $|\anticp{r}{P}|\geq 8t+64$. Let the two endpoints of $P$ be $a$ and $b$. (See Figure~\ref{fig:proof-illustrate}.) Let $u$ be a vertex of $P$ such that $\dist{r}{u}=\dist{r}{P}$. Let $\Pnote{a}{u}$ be the $(a,u)$-subpath of $P$ and $\Pnote{b}{u}$ be the $(b,u)$-subpath of $P$. Both $\Pnote{a}{u}$ and $\Pnote{b}{u}$ are isometric paths and observe that either $|\anticp{r}{\Pnote{a}{u}}|\geq 4t+32$ or  $|\anticp{r}{\Pnote{b}{u}}|\geq 4t+32$. Without loss of generality, assume that  $|\anticp{r}{\Pnote{b}{u} }|\geq 4t+32$. Let $\vertical{r}{b}$ be an isometric $(b,r)$-path in $G$. First observe that $u$ is not adjacent to any vertex of $\vertical{r}{b}$. Otherwise, $\dist{u}{b} \leq 2 + \dist{r}{b}-\dist{r}{u}$, which contradicts Proposition~\ref{prp:antichain-length}. Let $\Pnote{u}{w}$ be the maximum $(u,w)$-subpath, of $\Pnote{b}{u}$ such that no internal vertex of $\Pnote{u}{w}$ is a neighbour of $\vertical{r}{b}$. Note that $\Pnote{u}{w}$ is an isometric path and $w$ has a neighbour in $\vertical{r}{b}$. Applying Observation~\ref{obs:anti-cp-reduc}, we have the following:

\begin{claim}
$|\anticp{r}{\Pnote{u}{w}}| \geq 4t+29$.
\end{claim}

Let $\vertical{r}{u}$ be any isometric $(u,r)$-path of $G$. Observe that $w$ is not adjacent to any vertex of $\vertical{r}{u}$. Otherwise, $\dist{u}{w} \leq 2 + \dist{r}{u}-\dist{r}{w}$, which contradicts Proposition~\ref{prp:antichain-length}.  Let $\Pnote{z}{w}$ be the maximum $(z,w)$-subpath of $\Pnote{u}{w}$ such that no internal vertex of $\Pnote{z}{w}$ has a neighbour in $\vertical{r}{u}$. Observe that $\Pnote{z}{w}$ is an isometric path, and $z$ has a neighbour in $\vertical{r}{u}$. Again applying Observation~\ref{obs:anti-cp-reduc}, we have the following:

\begin{claim}\label{cl:2}
$|\anticp{r}{\Pnote{z}{w}}| \geq 4t+26$.
\end{claim}

Let $a_1, a_2, \ldots,a_k$ be the vertices of $\anticp{r}{\Pnote{z}{w}}$ ordered according to their appearance while traversing $\Pnote{z}{w}$ from $z$ to $w$. Due to Claim~\ref{cl:2}, we have that $k\geq 4t+26$. Let $c=a_{2t+13}$ and $\vertical{r}{c}$ denote an isometric $(c,r)$-path. Let $T(r,c_1)$ 
be the maximum subpath of $\vertical{r}{c}$ such that no internal vertex of $T(r,c_1)$ is adjacent to any vertex of $\Pnote{z}{w}$. Observe that neither $z$ nor $w$ can be adjacent to $c_1$ (due to Proposition~\ref{prp:antichain-length}). Morevoer, if $c_1$ is a vertex of $\Pnote{z}{w}$ then we must have $c_1=c$.

\begin{claim}\label{clm:life-saver}
Let $x$ be a neighbour of $c_1$ in $\Pnote{z}{w}$, $X$ be the $(x,b)$-subpath of $\Pnote{u}{b}$ and $Y$ be the $(x,u)$-subpath of $\Pnote{u}{b}$. Then $|\anticp{r}{X}| \geq 2t+11$ and $|\anticp{r}{Y}| \geq 2t+11$.
\end{claim}

 \begin{subproof}
\sloppy  Let $\Pnote{c}{w}$ denote the 
 $(c,w)$-subpath of $\Pnote{z}{w}$. Observe that $|\anticp{r}{\Pnote{c}{w}}| \geq 2t+14$.
 First, consider the case when $x$ lies in the $(z,c)$-subpath of $\Pnote{z}{w}$. In this case, $\Pnote{c}{w}$ is a subpath of $X$ and therefore $|\anticp{r}{X}| \geq 2t+14$. Now consider the case when $x$ lies in $\Pnote{c}{w}$. In this case, applying Observation~\ref{obs:anti-cp-reduc}, we have that $|\anticp{r}{X}| \geq |\anticp{r}{\Pnote{c}{w}}| - 3 \geq 2t+11$. Using a similar argument, we have that $|\anticp{r}{Y}| \geq 2t+11$.
 \end{subproof}

Let $T(c_1,c_2)$ be the maximum $(c_1,c_2)$-subpath of $T(c_1,r)$ such that no internal vertex of $T(c_1,c_2)$ is adjacent to a vertex of $\vertical{r}{b}$ or $\vertical{r}{u}$. {Note that, if $c_2$ lies on $\vertical{r}{b}$ or $\vertical{r}{u}$, we must have $c_2=r$}.  We have the following claim.

 \begin{claim}\label{clm:vacant-path}
 The length of $T(c_1,c_2)$ is at least $t+3$.
 \end{claim}

  \begin{subproof}
 Assume that the length of $T(c_1,c_2)$ is at most $t+2$ and $x$ be a neighbour of $c_1$ in $\Pnote{z}{w}$. Observe that all vertices of $\Pnote{z}{w}$ are at distance at least $\dist{r}{u}$ \textit{i.e.} $\dist{r}{\Pnote{z}{w}} \geq \dist{r}{u}$, {since $\dist{r}{u}=\dist{r}{P}$}. Hence, 

 \medskip \noindent \textbf{(+)} $\dist{r}{x} \geq \dist{r}{u}$ and $\dist{r}{c_1} \geq \dist{r}{u} - 1$.

\medskip \noindent Now, suppose $c_2$ has a neighbour $c_3$ in $\vertical{r}{u}$. Hence $\dist{c_3}{x} \leq \dist{c_3}{c_2} + \dist{c_2}{c_1} + \dist{c_1}{x} \leq t+4$. Now, using (+) and the fact that $c_3$ lies on an isometric $(r,u)$-path ($\vertical{r}{u}$), we have that $\dist{c_3}{u} \leq t+4$. Therefore, $\dist{u}{x} \leq \dist{c_3}{u} + \dist{c_3}{x} \leq 2t+8$.  But this contradicts  Proposition~\ref{prp:antichain-length} and Claim~\ref{clm:life-saver}, as they together imply that $\dist{u}{x}$ is at least $\dist{r}{x} - \dist{r}{u} + 2t+10 {\geq 2t+10}$.

 Hence, $c_2$ must have a neighbour $c_3$ in $\vertical{r}{b}$. First, assume that $\dist{r}{x} \geq \dist{r}{b}$. Then, as $\dist{c_3}{x} \leq \dist{c_3}{c_2} + \dist{c_2}{c_1} + \dist{c_1}{x} \leq t+4$ and $c_3$ lies on an isometric $(r,b)$-path ($\vertical{r}{b}$), we have that $\dist{x}{b} \leq 2t+8$. But {again} this contradicts  Proposition~\ref{prp:antichain-length} and Claim~\ref{clm:life-saver}, as they together imply that the length of $\dist{x}{b}$ is at least $\dist{r}{x} - \dist{r}{u} + 2t+10$. Now, assume that $\dist{r}{x} < \dist{r}{b}$. Let $b'$ be a vertex of $\vertical{r}{b}$ such that $\dist{r}{b'} = \dist{r}{x}$. Using a similar argumentation as before, we have that $\dist{x}{b'} \leq 2t+8$. Hence, $\dist{x}{b} \leq \dist{x}{b'} + \dist{b'}{b} \leq \dist{r}{b} - \dist{r}{x} + 2t+8$.
 But this contradicts Proposition~\ref{prp:antichain-length} which, due to Claim~\ref{clm:life-saver}, implies that $\dist{x}{b} \geq \dist{r}{b} - \dist{r}{x} + 2t+10$.
 \end{subproof}

The path $T(c_1,c_2)$ forms the first ingredient to extract a $(t+1)$-fat turtle. Let $z_1$ be the neighbour of $z$ in $\vertical{r}{u}$ and $w_1$ be the neighbour of $w$ in $\vertical{r}{b}$. We have the following claim.

\begin{claim}\label{clm:final}
The vertices $w_1$ and $z_1$ are non adjacent.
\end{claim}

 \begin{subproof}
 Recall that $z_1$ lies in $\vertical{r}{u}$ and $\dist{r}{z} \geq \dist{r}{u}$. Hence $z_1$ must be a neighbour of $u$. If $w_1$ and $z_1$ are adjacent, then observe that $\dist{u}{b} \leq \dist{r}{b} - \dist{r}{w_1} + 2 \leq $. This implies $\dist{u}{b} \leq \dist{r}{b} - \dist{r}{u} + 3$. But this shall again contradict Proposition~\ref{prp:antichain-length}.
 \end{subproof}

Now we shall construct a $(w_1,z_1)$-path as follows: Consider the maximum $(w_1,w_2)$-subpath, say $T(w_1,w_2)$, of $\vertical{r}{b}$ such that no internal vertex of $T(w_1,w_2)$ has a neighbour in $\vertical{r}{u}$. Similarly, consider the maximum $(z_1,z_2)$-subpath, say $T(z_1,z_2)$, of $\vertical{r}{u}$ such that no internal vertex of $T(z_1,z_2)$ is a neighbour of $w_2$. {(Note that it is possible that $z_2=w_2=r$.)}
Let $T$ be the path obtained by taking the union of $T(w_1,w_2)$ and $T(z_1,z_2)$. Observe that $z_2$ must be a neighbour of $w_2$ and $T$ is an induced $(w_1,z_1)$-path. The definitions of $T$ and $\Pnote{z}{w}$ imply that their union induces a cycle $Z$. Here we have the second and final ingredient to extract the $(t+1)$-fat turtle.

Suppose that $c_2$ has a neighbour in $T$. Let $T'$ be the maximum subpath of $T(c_1,c_2)$ which is vertex-disjoint
from $Z$. ({Note that if $c_1=c$ or $c_2\in \{w_2,z_2\}$ (e.g. when $c_2=w_2=z_2=r$), $T(c_1,c_2)$ may share vertices with $Z$}.) Due to Claim~\ref{clm:vacant-path}, the length of $T'$ is at least $t+1$. Let $e_1$ and $e_2$ be the endpoints of $T'$. Observe the following.

\begin{itemize}
    \item Each of $e_1$ and $e_2$ has at least one neighbour in $Z$.
    
    \item $Z-\{z,w\}$ contains two distinct components $C_1,C_2$ such that for $i\in \{1,2\}$, $N(e_i)\cap V(Z) \subseteq V(C_i)$.
    
    \item For a vertex $e_1'\in N(e_1)\cap V(Z)$ and $e_2'\in N(e_2)\cap V(Z)$, the distance between $e'_1$ and $e'_2$ is at least $t+1$. This statement follows from Claim~\ref{clm:life-saver}.
\end{itemize}

Hence, we have that the tuple $(t+1,Z,T',z,w)$ defines a $(t+1)$-fat turtle. Now consider the case when $c_2$ does not have a neighbour in $T$. By definition, $c_2$ has at least one neighbour in $\vertical{r}{u}$ or $\vertical{r}{b}$. Without loss of generality, assume that $c_2$ has a neighbour $c_3$ in $\vertical{r}{u}$ such that the $(z_2,c_3)$-subpath, say, $T''$ of $\vertical{r}{u}$ has no neighbour of $c_2$ {other than $c_3$}. Observe that the path $T^*= (T' \cup (T''-\{z_2\}))$ is vertex-disjoint from $Z$ and has length at least $t+1$. Let $e_1,e_2$ be the two endpoints of $T^*$. Observe the following.
\begin{itemize}
    \item Each of $e_1$ and $e_2$ has at least one neighbour in $Z$.
    
    \item $Z-\{z,w\}$ contains two distinct components $C_1,C_2$ such that for $i\in \{1,2\}$, $N(e_i)\cap V(Z) \subseteq V(C_i)$.
    
    \item For a vertex $e_1'\in N(e_1)\cap V(Z)$ and $e_2'\in N(e_2)\cap V(Z)$, the distance between $e'_1$ and $e'_2$ is at least $t+1$. This statement follows from Claim~\ref{clm:life-saver}.
\end{itemize}

 Hence, $(t+1,Z,T^*,z,w)$ is a $(t+1)$-fat turtle
\end{proof}

\noindent \textbf{Proof of Theorem~\ref{thm:main}\ref{thm:truemper}:} Lemma~\ref{lem:ipac-ipco}, \ref{lem:fat-turtle} and~\ref{lem:ipac-fat} together imply Theorem~\ref{thm:main}\ref{thm:truemper}.



 \begin{lemma}\label{lem:outer-truemper}
 Any outerstring graph is ($4$-theta, $4$-prism, $4$-pyramid)-free.
 \end{lemma}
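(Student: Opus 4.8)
The plan is to argue topologically, exploiting the single feature that distinguishes outerstring graphs from general string graphs: every curve touches a common ground line. Fix a grounded representation and compactify it into the standard disk model: map the closed upper half-plane (plus the point at infinity) homeomorphically onto a closed disk $D$ so that the ground line becomes the boundary circle $\partial D$. Then each vertex $v$ corresponds to a simple arc $c_v\subseteq D$ with $c_v\cap\partial D=\{g_v\}$ (its single grounding point), two arcs meeting in the interior of $D$ exactly when the vertices are adjacent. I would first observe that all three forbidden objects have a common shape: a $4$-theta, $4$-prism or $4$-pyramid consists of two \emph{ends} $A$ and $B$ (each either a single vertex or a triangle) joined by three induced paths $P_1,P_2,P_3$, pairwise disjoint and with no edges between them except inside $A$ and inside $B$; moreover for $t=4$ every $P_i$ has length at least $4$ and hence an internal vertex at distance at least $2$ from both ends.

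Next I would extract a topological theta. Since $A$ (resp.\ $B$) induces a connected set of curves, and each path $P_i$ yields a chain of consecutively-crossing curves joining the $A$-curves to the $B$-curves, and since internal vertices of distinct paths are non-adjacent (so their curves do not cross), the three chains are disjoint away from the ends. Routing the three chains through the connected $A$-blob to a common interior point $\alpha$, and through the $B$-blob to a common interior point $\beta$, I obtain three simple arcs $\gamma_1,\gamma_2,\gamma_3$ from $\alpha$ to $\beta$, pairwise meeting only at $\{\alpha,\beta\}$ and lying in the open disk $\mathring D$, i.e.\ a topological theta $\Theta$ disjoint from $\partial D$. Here $\gamma_i$ is made of pieces of the curves of $P_i$ in its middle and pieces of the end-curves near $\alpha,\beta$.

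Then I would run a Jordan separation argument. The theta $\Theta$ splits $\mathring D$ into three faces; the face $F_{\mathrm{out}}$ containing a collar of $\partial D$ is bounded by the two \emph{extreme} arcs, while the third (\emph{middle}) arc $\gamma_m$ satisfies $\gamma_m\cap\overline{F_{\mathrm{out}}}=\{\alpha,\beta\}$. Let $P_m$ be the path whose chain forms $\gamma_m$ and choose a vertex $w^\ast$ internal to $P_m$ at distance at least $2$ from both ends (possible since $t=4$). Its curve $c_{w^\ast}$ contributes a sub-arc $\tau^\ast\subseteq\gamma_m$ whose endpoints are interior transition points of $\gamma_m$, so $\tau^\ast$ avoids $\{\alpha,\beta\}$ and lies outside $\overline{F_{\mathrm{out}}}$; yet the grounding point $g_{w^\ast}\in\partial D\subseteq\overline{F_{\mathrm{out}}}$. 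The sub-arc of $c_{w^\ast}$ from $g_{w^\ast}$ to $\tau^\ast$ therefore leaves $\overline{F_{\mathrm{out}}}$, hence crosses $\partial F_{\mathrm{out}}$, which inside $\mathring D$ is contained in the two extreme arcs together with $\{\alpha,\beta\}$. At that crossing point $c_{w^\ast}$ meets a curve belonging to $P_1$, $P_3$, or an end, forcing $w^\ast$ to be adjacent to a vertex of another path or of an end --- impossible for a deep internal vertex of an induced configuration. This contradiction shows $G$ contains no induced $4$-theta, $4$-prism, or $4$-pyramid.

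The main obstacle is the topological bookkeeping rather than the idea: cleanly extracting $\Theta$ with genuinely internally-disjoint arcs, especially routing the three chains to single points $\alpha,\beta$ through the \emph{triangle} ends of a prism or pyramid while keeping the arcs disjoint except at the common endpoint (a tripod inside a connected union of arcs), and rigorously identifying which arc is the middle one and justifying that any curve reaching the ground from $\gamma_m$ must cross an extreme arc. It is also worth recording that the constant $4$ is exactly what the argument needs: for $t=3$ each path has length only $\ge 3$ and thus no internal vertex at distance $\ge 2$ from both ends, so the final forbidden-adjacency step breaks down --- matching the fact that the bound in the lemma is $4$.
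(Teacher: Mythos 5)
Your argument is correct in its essentials, but it takes a genuinely different route from the paper. The paper's proof is a short combinatorial reduction using three black-box results: string/outerstring graphs are closed under edge contraction, so contracting the triangle edges of a $4$-prism or $4$-pyramid (a $4$-theta needs no contraction) leaves an outerstring $3$-theta $H_1$; by the apex characterization of Biedl et al., the graph $H_1^+$ obtained by joining a new apex $a$ to every vertex by a path of length at least $2$ is a string graph; and choosing on each branch a vertex $w_i$ with $\dist{u}{w_i}=2$ and $\dist{v}{w_i}\geq 2$ produces an induced full subdivision of $K_{3,3}$ with parts $\{u,v,a\}$ and $\{w_1,w_2,w_3\}$ inside $H_1^+$, contradicting Kratochvíl's theorem that string graphs exclude such subdivisions. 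You instead unroll the topology directly: your boundary circle $\partial D$ plays exactly the role the apex plays in the paper (both encode that every curve reaches one common object), and your Jordan-separation step --- the middle branch of the theta-curve is screened from $\partial D$ by the two extreme branches, so the grounded curve of a deep internal vertex is forced into an illegal crossing --- is in spirit the same planarity obstruction that underlies the $K_{3,3}$ exclusion, reproved from scratch in the special case needed. Your approach buys self-containedness (essentially only the Jordan curve theorem) and a uniform treatment of all three configurations without contraction; the paper's buys brevity and avoids all topological bookkeeping. Your closing observation that $t=4$ is exactly what the argument needs (an internal vertex at distance at least $2$ from both ends) matches the paper's choice of the $w_i$ precisely.

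Two details deserve flagging, though neither is fatal. First, the paper's stated definition of ``grounded'' literally only requires one endpoint of each curve to lie on the line; your argument (and indeed the lemma itself) needs the standard convention that all curves lie in the closed half-plane on one side of the line, meeting it only at their grounding point --- otherwise a $4$-theta, being planar and hence a string graph, could itself be drawn with all endpoints on a line and the lemma would be false. Your normalization $c_v\cap\partial D=\{g_v\}$ is the right reading and is achievable by routine perturbation, but it should be stated as an assumption on the representation. Second, the extraction of the theta-curve with genuinely internally disjoint branches (the triod through a triangle end, and ensuring that the piece of a first internal curve used by one branch does not meet end-curve portions used by another) is asserted rather than carried out; it is standard --- for instance, take in each chain an arc from the $A$-blob to the $B$-blob whose interior avoids both blobs, then split each blob at a suitable ``middle'' attachment point --- but in a complete write-up this is where most of the work lies, as you yourself acknowledge.
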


  \begin{proof}
 To prove the lemma, we shall need to recall a few definitions and results from the literature. A graph $G$ is a \emph{string} graph if there is a collection $S$ of simple curves on the plane and a bijection between $V(G)$ and $S$ such that two curves in $S$ intersect if and only if the corresponding vertices are adjacent in $G$. Let $G$ be a graph with an edge $e$. The graph $G/ e$ is obtained by \emph{contracting} the edge $e$ into a single vertex. Observe that string graphs are closed under edge contraction~\cite{kratochvil1991string}. We shall use the following result.

 \begin{proposition}[\cite{kratochvil1991string}]\label{prp:contract}
 Let $G$ be an outerstring graph with an edge $e$. Then $G/ e$ is an outerstring graph.
 \end{proposition}

  A \emph{full subdivision} of a graph is a graph obtained by replacing each edge of $G$ with a new path of length at least~2.  We shall use the following result implied from Theorem $1$ of~\cite{kratochvil1991string}. 

 \begin{proposition}[\cite{kratochvil1991string}]\label{prp:k33}
 Let $G$ be a string graph. Then $G$ does not contain a full subdivision of $K_{3,3}$ as an induced subgraph.
 \end{proposition}

\textcolor{black}{Indeed, if the full subdivision of $K_{3,3}$ was a string graph, then the corresponding intersection representation with simple curves could be used to construct a planar drawing of $K_{3,3}$, which is impossible.} For a graph $G$, the graph $G^+$ is constructed by introducing a new \emph{apex} vertex $a$ and connecting $a$ with all vertices of $G$ by new copies of paths of length at least $2$. We shall use the following result of Biedl \textit{et al.}~\cite{biedl2018size}.

 \begin{proposition}[Lemma 1, \cite{biedl2018size}] \label{prp:apex-string}
 A graph $G$ is an outerstring graph if and only if $G^+$ is a string graph.
 \end{proposition}

 Now we are ready to prove the lemma. Let $G$ be an outerstring graph. Assume for the sake of contradiction that $G$ contains an induced subgraph $H$ which is a $4$-theta, $4$-pyramid, or a $4$-prism. Since every induced subgraph of an outerstring graph is also an outerstring graph, we have that $H$ is an outerstring graph. Let $E$ be the set of edges of $H$ whose both endpoints are part of the \textcolor{black}{same} triangle. Now consider the graph $H_1= H / E$ which is obtained by contracting all edges in $E$. By Proposition~\ref{prp:contract}, $H_1$ is an outerstring graph and it is easy to check that $H_1$ is a $3$-theta.
 Let $u$ and $v$ be the vertices of $H_1$ with degree~3 and $w_1,w_2,w_3$ be the set of mutually non-adjacent vertices such that for each $i\in \{1,2,3\}$ $\dist{u}{w_i}=2$ and $\dist{v}{w_i}\geq 2$. Since $H_1$ is a $3$-theta, $w_1,w_2,w_3$ exist. Now consider the graph $H_1^+$ and let $a$ be the new apex vertex. Due to Proposition~\ref{prp:apex-string}, we have that $H_1^+$ is a string graph. But notice that, for each pair of vertices in $\{x,y\} \subset \{w_1,w_2,w_3,u,v,a\}$, there exists a unique path of length {at least 2} connecting $x,y$. This implies that $H_1^+$ (which is a string graph) contains a full subdivision of $K_{3,3}$, which contradicts Proposition~\ref{prp:k33}. 
  \end{proof}

\noindent \textbf{Proof of Theorem~\ref{thm:main}\ref{thm:outer}:} Lemma~\ref{lem:ipac-ipco}, \ref{lem:fat-turtle}, \ref{lem:ipac-fat}, and~\ref{lem:outer-truemper} together imply Theorem~\ref{thm:main}\ref{thm:outer}.

\section{Proof of Theorem~\ref{thm:lower}}\label{sec:lower}

\newcommand{\Graph}[1]{X_{#1}}
\newcommand{\BGraph}[1]{Y_{#1}}
\newcommand{\CGraph}[1]{Z_{#1}}
\newcommand{\DGraph}[1]{W_{#1}}

\begin{figure}
    \centering
    \scalebox{1}{
    \begin{tabular}{cccc}
       \begin{tikzpicture}
       \filldraw (0,0) circle (2pt);
       \filldraw (0,-0.5) circle (2pt);
       \filldraw (0,-1) circle (2pt);
       \filldraw (0,-1.5) circle (2pt);
       \filldraw (0,-2) circle (2pt);
       
    \filldraw (-0.5,-0.5) circle (2pt);
    \filldraw (-0.5,-1) circle (2pt);
    \filldraw (-0.5,-1.5) circle (2pt);
    \filldraw (-0.5,-2) circle (2pt);
    
    \filldraw (-1,-0.5) circle (2pt);
    \filldraw (-1,-1) circle (2pt);
    \filldraw (-1,-1.5) circle (2pt);
    \filldraw (-1,-2) circle (2pt);
    
    \filldraw (0.5,-0.5) circle (2pt);
    \filldraw (0.5,-1) circle (2pt);
    \filldraw (0.5,-1.5) circle (2pt);
    \filldraw (0.5,-2) circle (2pt);
    
    \filldraw (1,-0.5) circle (2pt);
    \filldraw (1,-1) circle (2pt);
    \filldraw (1,-1.5) circle (2pt);
    \filldraw (1,-2) circle (2pt);
    
    \draw[thick] (0,0) -- (0.5,-0.5) -- (0.5,-2);
    \draw[thick] (0,0) -- (-0.5,-0.5) -- (-0.5,-2);
    \draw[thick] (0,0) -- (-1,-0.5) -- (-1,-2);
    \draw[thick] (0,0) -- (1,-0.5) -- (1,-2);
    \draw[thick] (0,0) -- (0,-0.5) -- (0,-2);
    \draw[thick] (-1,-2) -- (1,-2);
    
    \node[above] at (0,0) {$a$};
    \node[below] at (-1,-2) {$b_1$};    
    \node[below] at (-0.5,-2) {$b_2$};    
    \node[below] at (0,-2) {$b_3$};    
    \node[below] at (0.5,-2) {$b_4$};    
    \node[below] at (1,-2) {$b_5$};
    
       \end{tikzpicture}  & 
       \begin{tikzpicture}
       \filldraw (0,0) circle (2pt);
       \filldraw (0,-0.5) circle (2pt);
       \filldraw (0,-1) circle (2pt);
       \filldraw (0,-1.5) circle (2pt);
       \filldraw (0,-2) circle (2pt);
       
    \filldraw (-0.5,-0.5) circle (2pt);
    \filldraw (-0.5,-1) circle (2pt);
    \filldraw (-0.5,-1.5) circle (2pt);
    \filldraw (-0.5,-2) circle (2pt);
    
    \filldraw (-1,-0.5) circle (2pt);
    \filldraw (-1,-1) circle (2pt);
    \filldraw (-1,-1.5) circle (2pt);
    \filldraw (-1,-2) circle (2pt);
    
    \filldraw (0.5,-0.5) circle (2pt);
    \filldraw (0.5,-1) circle (2pt);
    \filldraw (0.5,-1.5) circle (2pt);
    \filldraw (0.5,-2) circle (2pt);
    
    \filldraw (1,-0.5) circle (2pt);
    \filldraw (1,-1) circle (2pt);
    \filldraw (1,-1.5) circle (2pt);
    \filldraw (1,-2) circle (2pt);
    
    \draw[thick] (0,0) -- (0.5,-0.5) -- (0.5,-2);
    \draw[thick] (0,0) -- (-0.5,-0.5) -- (-0.5,-2);
    \draw[thick] (0,0) -- (-1,-0.5) -- (-1,-2);
    \draw[thick] (0,0) -- (1,-0.5) -- (1,-2);
    \draw[thick] (0,0) -- (0,-0.5) -- (0,-2);
    \draw[thick] (-1,-2) -- (1,-2);
    
    \draw[thick] (-1,-2) -- (-0.5,-1.5);
    \draw[thick] (0,-1.5) -- (-0.5,-2);
    \draw[thick] (0,-2) -- (0.5,-1.5);
    \draw[thick] (1,-1.5) -- (0.5,-2);
    
    \node[above] at (0,0) {$a$};
    \node[below] at (-1,-2) {$b_1$};    
    \node[below] at (-0.5,-2) {$b_2$};    
    \node[below] at (0,-2) {$b_3$};    
    \node[below] at (0.5,-2) {$b_4$};    
    \node[below] at (1,-2) {$b_5$};    
    
       \end{tikzpicture} & 
       \begin{tikzpicture}
       \filldraw (0,0) circle (2pt);
       \filldraw (0,-0.5) circle (2pt);
       \filldraw (0,-1) circle (2pt);
       \filldraw (0,-1.5) circle (2pt);
       \filldraw (0,-2) circle (2pt);
       
    \filldraw (-0.5,-0.5) circle (2pt);
    \filldraw (-0.5,-1) circle (2pt);
    \filldraw (-0.5,-1.5) circle (2pt);
    \filldraw (-0.5,-2) circle (2pt);
    
    \filldraw (-1,-0.5) circle (2pt);
    \filldraw (-1,-1) circle (2pt);
    \filldraw (-1,-1.5) circle (2pt);
    \filldraw (-1,-2) circle (2pt);
    
    \filldraw (0.5,-0.5) circle (2pt);
    \filldraw (0.5,-1) circle (2pt);
    \filldraw (0.5,-1.5) circle (2pt);
    \filldraw (0.5,-2) circle (2pt);
    
    \filldraw (1,-0.5) circle (2pt);
    \filldraw (1,-1) circle (2pt);
    \filldraw (1,-1.5) circle (2pt);
    \filldraw (1,-2) circle (2pt);
    
    \draw[thick] (0,0) -- (0.5,-0.5) -- (0.5,-2);
    \draw[thick] (0,0) -- (-0.5,-0.5) -- (-0.5,-2);
    \draw[thick] (0,0) -- (-1,-0.5) -- (-1,-2);
    \draw[thick] (0,0) -- (1,-0.5) -- (1,-2);
    \draw[thick] (0,0) -- (0,-0.5) -- (0,-2);
    \draw[thick] (-1,-2) -- (1,-2);
    
    \draw[thick] (-1,-2) -- (-0.5,-1.5);
    \draw[thick] (0,-1.5) -- (-0.5,-2);
    \draw[thick] (0,-2) -- (0.5,-1.5);
    \draw[thick] (1,-1.5) -- (0.5,-2);
    
   \node[above] at (0,0) {$a$};
    \node[below] at (-1,-2) {$b_1$};    
    \node[below] at (-0.5,-2) {$b_2$};    
    \node[below] at (0,-2) {$b_3$};    
    \node[below] at (0.5,-2) {$b_4$};    
    \node[below] at (1,-2) {$b_5$};
    
    \node[right] at (1.2, -0.5) {$K_5$};
    
    \filldraw[fill=gray!20,opacity=0.2,draw=black] (0,-0.5) ellipse (1.25cm and 0.25cm);

       \end{tikzpicture} & \begin{tikzpicture}
       \filldraw (0,0) circle (2pt);
       \filldraw (0,-0.5) circle (2pt);
       \filldraw (0,-1) circle (2pt);
       \filldraw (0,-1.5) circle (2pt);
       \filldraw (0,-2) circle (2pt);
       
    \filldraw (-0.5,-0.5) circle (2pt);
    \filldraw (-0.5,-1) circle (2pt);
    \filldraw (-0.5,-1.5) circle (2pt);
    \filldraw (-0.5,-2) circle (2pt);
    
    \filldraw (-1,-0.5) circle (2pt);
    \filldraw (-1,-1) circle (2pt);
    \filldraw (-1,-1.5) circle (2pt);
    \filldraw (-1,-2) circle (2pt);
    
    \filldraw (0.5,-0.5) circle (2pt);
    \filldraw (0.5,-1) circle (2pt);
    \filldraw (0.5,-1.5) circle (2pt);
    \filldraw (0.5,-2) circle (2pt);
    
    \filldraw (1,-0.5) circle (2pt);
    \filldraw (1,-1) circle (2pt);
    \filldraw (1,-1.5) circle (2pt);
    \filldraw (1,-2) circle (2pt);
    
    \draw[thick] (0,0) -- (0.5,-0.5) -- (0.5,-2);
    \draw[thick] (0,0) -- (-0.5,-0.5) -- (-0.5,-2);
    \draw[thick] (0,0) -- (-1,-0.5) -- (-1,-2);
    \draw[thick] (0,0) -- (1,-0.5) -- (1,-2);
    \draw[thick] (0,0) -- (0,-0.5) -- (0,-2);
    
    \node[above] at (0,0) {$a$};
     \foreach \x/\y [count = \n] in {-0.75/-1.75,-0.75/-2.25,-0.75/-2.5,-0.75/-3 }
    {
          \filldraw (\x, \y) circle (1pt);
    	 \draw (\x, \y) -- (-1,-2);
          \draw (\x, \y) -- (-0.5,-2);
    }
     \foreach \x/\y [count = \n] in {-0.25/-1.75,-0.25/-2.25,-0.25/-2.5,-0.25/-3 }
    {
          \filldraw (\x, \y) circle (1pt);
    	 \draw (\x, \y) -- (0,-2);
          \draw (\x, \y) -- (-0.5,-2);
    }
    \foreach \x/\y [count = \n] in {0.25/-1.75,0.25/-2.25,0.25/-2.5,0.25/-3 }
    {
          \filldraw (\x, \y) circle (1pt);
    	 \draw (\x, \y) -- (0,-2);
          \draw (\x, \y) -- (0.5,-2);
    }
    \foreach \x/\y [count = \n] in {0.75/-1.75,0.75/-2.25,0.75/-2.5,0.75/-3 }
    {
          \filldraw (\x, \y) circle (1pt);
    	 \draw (\x, \y) -- (1,-2);
          \draw (\x, \y) -- (0.5,-2);
    }
       \end{tikzpicture}\\
         (a) & (b) & (c) & (d) 
    \end{tabular}}
    \caption{(a) $\Graph{4}$~(b) $\BGraph{4}$~(c) $\CGraph{4}$~(d) $\DGraph{4}$.}
    \label{fig:lower}
\end{figure}

{We shall provide a construction for every $k\geq 4$, this implies the statement of Theorem~\ref{thm:lower} for any $k\geq 1$.} 
First we shall prove Theorem~\ref{thm:lower}\ref{it:a}. For a fixed integer $k\geq 4$, first we describe the construction of a graph $\Graph{k}$ as follows. Consider $k+1$ paths $P_1, P_2, \ldots, P_{k+1}$ each of length $k$ and having a common endpoint $a$. For $i\in [k+1]$, let the other endpoint of $P_i$ be denoted as $b_i$. Moreover, for $i\in [k+1]$, let the neighbours of $a$ and $b_i$ in $P_i$ be denoted as $a'_i$ and $b'_i$, respectively. For $i \in [k]$, introduce an edge between $b_i$ and $b_{i+1}$. The resulting graph is denoted $\Graph{k}$ and the special vertex $a$ is the \emph{apex} of $\Graph{k}$. See Figure~\ref{fig:lower}(a). For a fixed integer $k\geq 4$, consider the graph $\Graph{k}$ and for each $i\in [k]$, introduce an edge between $b_i$ and $b'_{i+1}$. Let $\BGraph{k}$ denote the resulting graph and the special vertex $a$ is the \emph{apex} of $\BGraph{k}$. See Figure~\ref{fig:lower}(b). For a fixed integer $k\geq 4$, consider the graph $\BGraph{k}$ and for each $\{i,j\}\subseteq [k]$, introduce an edge between $a'_i$ and $a'_{j}$. Let $\CGraph{k}$ denote the resulting graph and the special vertex $a$ is the \emph{apex} of $\CGraph{k}$. See Figure~\ref{fig:lower}(c). { For a fixed integer $k\geq 4$, consider the graph $\Graph{k}$. For each $i\in [k]$, delete the edge $b_ib_{i+1}$ and introduce $k$ new vertices, each of which is adjacent to only $b_i$ and $b_{i+1}$. Call the resulting graph $W_k$. See Figure~\ref{fig:lower}(d).}

 We shall use the following result relating hyperbolicity and \emph{isometric cycles}. An induced cycle $C$ of a graph $G$ is \emph{isometric} if for any two vertices $u,v$ of $C$, the distance between $u,v$ in $C$ is the same as that in $G$.

 \begin{proposition}[Theorem 2, \cite{wu2011hyperbolicity}]\label{prp:isometric-cycle}
 Let $G$ be a graph containing an isometric cycle of order $k$ with $k \equiv c~(\bmod~4)$. Then the hyperbolicity of $G$ is at least $\lceil\frac{k}{4}\rceil - \frac{1}{2}$ if $c=1$ and $\lceil\frac{k}{4}\rceil$, otherwise.
 \end{proposition}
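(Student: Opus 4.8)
The plan is to reduce the entire estimate to the cycle and then invoke the four-point condition directly. Since the cycle $C$ of order $k$ is \emph{isometric}, for any two of its vertices $u,v$ we have that $\dist{u}{v}$ (computed in $G$) equals the distance measured along $C$; hence it suffices to exhibit four vertices \emph{of $C$} whose two largest distance sums differ by roughly $k/2$, and all distances involved may be read off from $C$ alone. Recall that the definition of $\delta$-hyperbolicity used here says that, for any four vertices, $2\delta \geq \sigma_1 - \sigma_2$, where $\sigma_1 \geq \sigma_2 \geq \sigma_3$ are the three distance sums; so any four vertices of $C$ yield the lower bound $\delta(G) \geq \tfrac12(\sigma_1 - \sigma_2)$.

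First I would pick four vertices $w_0, w_1, w_2, w_3$ occurring in this cyclic order on $C$ and splitting it into four arcs of lengths $a_0, a_1, a_2, a_3$ chosen as balanced as possible, i.e.\ each equal to $\lfloor k/4\rfloor$ or $\lceil k/4\rceil$. Each such arc has length at most $\lceil k/4\rceil \leq \lfloor k/2\rfloor$, so it is a geodesic of $C$, and by isometry a geodesic of $G$. Consequently the two ``side'' sums are $S_1 = a_0 + a_2$ and $S_2 = a_1 + a_3$ with $S_1 + S_2 = k$, while the ``diagonal'' sum is $S_3 = \dist{w_0}{w_2} + \dist{w_1}{w_3} = \min\{a_0+a_1,\, a_2+a_3\} + \min\{a_1+a_2,\, a_3+a_0\}$. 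The core estimate is that for balanced arcs $S_3$ is the largest of the three sums and exceeds $\max\{S_1,S_2\}$ by about $k/2$, so that $\delta(G) \geq \tfrac12\bigl(S_3 - \max\{S_1,S_2\}\bigr)$ already gives the announced order of magnitude $k/4$.

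The main obstacle is the residue-by-residue bookkeeping modulo $4$: for each value of $c = k \bmod 4$ one must fix the exact multiset of arc lengths (when $4 \mid k$ all four arcs are $k/4$; otherwise one or two arcs differ by $1$) and evaluate the two minima defining $S_3$ precisely, checking that the rounding produces exactly the stated constant. The delicate case is $c \equiv 1 \pmod 4$, where perfect balance is impossible and one arc is forced to differ from the others; this is the source of the extra additive loss recorded in the statement, and it requires verifying that no alternative placement of the four vertices recovers the missing half-unit. Two routine but essential checks accompany this: that the geodesics realizing all six pairwise distances stay inside $C$ (this is exactly where isometry is used, since a shortcut through $G \setminus C$ could only shorten a distance and would invalidate both $S_1 + S_2 = k$ and the computed diagonals), and that $S_3 \geq \max\{S_1,S_2\}$, so that the gap being bounded really lies between the two \emph{largest} of the three sums.
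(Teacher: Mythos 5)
Your overall strategy is exactly the standard route to this result: restrict the four-point condition to the isometric cycle (isometry guarantees all six pairwise distances of vertices of $C$ can be read off $C$ alone, which you correctly identify as the only place isometry is used), then test a balanced quadruple. Note that the paper offers no proof to compare against, since it imports the proposition from~\cite{wu2011hyperbolicity}. Your intermediate formulas are also right: each balanced arc of length at most $\lceil k/4\rceil\le\lfloor k/2\rfloor$ is a geodesic, $S_3=\min\{a_0+a_1,\,a_2+a_3\}+\min\{a_1+a_2,\,a_3+a_0\}$, and $S_3\ge\max\{S_1,S_2\}$ holds for balanced arcs.

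The gap is precisely in the step you defer as ``routine but essential bookkeeping'': that computation cannot ``check that the rounding produces exactly the stated constant,'' because the statement as transcribed here (with ceilings) is off by one whenever $k\not\equiv 0\pmod 4$. Write $k=4m+c$ and evaluate your own formulas on balanced arcs: for $c=0,1,2,3$ one gets $S_3=4m,\,4m,\,4m+2,\,4m+2$ and $\max\{S_1,S_2\}=2m,\,2m+1,\,2m+2,\,2m+2$, hence $\delta\ge m,\ m-\tfrac12,\ m,\ m$ respectively --- that is, $\lfloor k/4\rfloor$ (with an extra $-\tfrac12$ when $c=1$), not $\lceil k/4\rceil$. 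Moreover, the alternative-placement search you flag for the $c=1$ case is doomed: these floor values are the \emph{exact} hyperbolicity of $C_k$ under the paper's convention (two larger sums differ by at most $2\delta$), so taking $G=C_k$ already refutes the statement as written --- e.g.\ $C_5$ has hyperbolicity $\tfrac12<\lceil 5/4\rceil-\tfrac12=\tfrac32$, and $C_6$ has hyperbolicity $1<\lceil 6/4\rceil=2$. The deficit in the $c=1$ case is thus a full unit, not the ``missing half-unit'' you hoped to recover. What Wu and Zhang actually prove, and what your argument faithfully carried out does deliver, is the floor version: $\delta(G)\ge\lfloor k/4\rfloor-\tfrac12$ if $c=1$ and $\delta(G)\ge\lfloor k/4\rfloor$ otherwise; your narrative (perfect balance impossible when $c=1$, costing an additive $\tfrac12$) matches that corrected statement, and you should prove that version instead. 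This correction is harmless downstream: the paper invokes the proposition only for isometric cycles of even length $2k$, where $\lfloor 2k/4\rfloor=\lfloor k/2\rfloor\ge\lceil k/2\rceil-1$ is all that is needed.
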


We now prove the following lemmas.

\begin{lemma}\label{lem:lower-1}
For $k\geq 4$, let $G$ be the graph constructed by taking two distinct copies of $\Graph{k}$ and identifying the two apex vertices. Then $G$ is a (pyramid, prism)-free graph with tree-width $2$, hyperbolicity at least $\lceil\frac{k}{2}\rceil-1$ and $\ipac{G} \geq k$.
\end{lemma}
 \begin{proof}
 Since $G$ is triangle-free, clearly $G$ is (pyramid, prism)-free. Moreover, for any induced cycle $C$ of $G$, and any vertex $w\notin C$, observe that $w$ has only one neighbour in $C$. Therefore, $G$ is also wheel-free. Observe that $G$ has an isometric cycle of length at least $2k$. Therefore, due to Proposition~\ref{prp:isometric-cycle}, $G$ has hyperbolicity at least $\lceil\frac{k}{2}\rceil-1$. Since removing the vertex $a$ from $G$ makes it acyclic, the tree-width of $G$ is two. Let $H$ and $H'$ denote the two copies of $\Graph{k}$ used to construct $G$. Let $r$ be any vertex of $G$ and, without loss of generality, assume that $r$ is a vertex of $H'$. Consider the graph $\overrightarrow{G_r}$. Now recall the construction of $H$ (which is isomorphic to $\Graph{k}$) and consider the path $Q=b_1~b_2\ldots b_k$. Observe that $Q$ is an isometric path and for any two vertices $u,v\in V(Q)$ we have $\dist{r}{u}=\dist{r}{v}$. Therefore, $\anticp{r}{Q} \geq k$. Hence, $\ipac{G} \geq k$. 
 \end{proof}


\begin{lemma}\label{lem:lower-2}
For $k\geq 4$, let $G$ be the graph constructed by taking two distinct copies of $\BGraph{k}$ and identifying the two apex vertices. Then $G$ is a (theta, prism)-free graph with tree-width $3$, hyperbolicity at least $\lceil\frac{k}{2}\rceil-1$, and $\ipac{G} \geq k$.
\end{lemma}

 \begin{proof}
 Since removing the special vertex $a$ from $G$ results in a graph with tree-width $2$, it follows that $G$ has tree-width at most $3$. Observe that $G$ has an isometric cycle of length at least $2k$. Therefore, due to Proposition~\ref{prp:isometric-cycle}, $G$ has hyperbolicity at least $\lceil\frac{k}{2}\rceil-1$. Let $H$ and $H'$ denote the two copies of $\BGraph{k}$ used to construct $G$. First we shall show that $H$ does not contain a theta or a prism. Consider the graph $H_1$ obtained by removing the apex of $H$. Observe that $H_1$ does not contain a vertex $v$ such that the vertices in $N[v]$ induce a $K_{1,3}$. Hence $H$ does not contain a theta. It also can be verified that $H_1$ does not contain a prism. Since the neighbourhood of $a$ is triangle-free, it follows that $H$ does not contain a prism. Similarly, $H'$ does not contain a theta or a prism. Now, from our construction, it follows that $G$ does not contain a theta or a prism.  Moreover, for any induced cycle $C$ of $G$, and any vertex $w\notin C$, observe that $w$ has at most two neighbours in $C$. Therefore, $G$ is wheel-free. Using arguments similar to the ones used in the proof of Lemma~\ref{lem:lower-1}, we have that $\ipac{G} \geq k$.   
 \end{proof}


\begin{lemma}\label{lem:lower-3}
For $k\geq 4$, let $G$ be the graph constructed by taking two distinct copies of $\CGraph{k}$ and identifying the two apex vertices. Then $G$ is a (theta, pyramid)-free graph with hyperbolicity at least $\lceil\frac{k}{2}\rceil-1$ and $\ipac{G} \geq k$.
\end{lemma}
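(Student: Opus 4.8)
The plan is to prove the three assertions separately, treating (theta,\,pyramid)-freeness as the genuinely new content and deducing the hyperbolicity bound and the bound on $\ipac{G}$ exactly as in Lemmas~\ref{lem:lower-1} and~\ref{lem:lower-2}. The key remark driving the structural part is that any theta or pyramid contains an \emph{induced} $K_{1,3}$: in a theta the three neighbours of a branch vertex lie on three distinct paths and are pairwise non-adjacent, and in a pyramid the three neighbours of the apex are pairwise non-adjacent as well (the only edges between the three constituent paths are the triangle edges and the three edges at the apex, and at least two paths have length $\geq 2$). Since theta and pyramid are \emph{induced} subgraphs, it therefore suffices to prove that $G$ is \emph{claw-free}, i.e.\ that for every vertex $w$ the graph $G[N(w)]$ has independence number at most $2$.

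I would verify claw-freeness by a short case analysis over the vertex types of $G$ (recall that the new feature of $\CGraph{k}$ compared with $\BGraph{k}$ is that the vertices $a'_i$ form a clique). For the apex $a$, $N(a)$ is the union of the two top-cliques $\{a'_i\}$ coming from the two copies, with no edges across, so an independent set meets each clique at most once and has size at most $2$. For a clique vertex $a'_i$, its neighbours are the remaining clique vertices (pairwise adjacent, $a$ included) together with a single vertex of the leg $P_i$, again giving independence number $2$. Internal leg vertices have degree $2$. For a vertex $b'_i$, the neighbours are $b_i$, the next leg vertex, and (through the diagonal $b_{i-1}b'_i$) the vertex $b_{i-1}$, whose only internal edge is $b_{i-1}b_i$, so independence number $2$. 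Finally, for a bottom vertex $b_i$ the neighbourhood is $\{b'_i,b_{i-1},b_{i+1},b'_{i+1}\}$ and the induced graph on it is the matching $\{b_{i-1}b'_i,\ b_{i+1}b'_{i+1}\}$, i.e.\ a $2K_2$, hence independence number $2$ (the endpoints $b_1,b_{k+1}$ have even fewer neighbours). Thus no vertex of $G$ is the centre of an induced claw, so $G$ is claw-free and therefore (theta,\,pyramid)-free.

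For the hyperbolicity bound I would exhibit an isometric cycle of length at least $2k$ and invoke Proposition~\ref{prp:isometric-cycle}, as in the two previous lemmas; the extra care here is that the top clique and the diagonals $b_ib'_{i+1}$ create chords, so the cycle must be routed to avoid them. Concretely, one goes down one leg, along the bottom path, enters the opposite leg through a diagonal (rather than through its endpoint $b_j$), climbs that leg, and closes with a single clique edge between the two leg-tops, which yields an induced cycle of length $\geq 2k$. For $\ipac{G}\geq k$ I would repeat the argument of Lemma~\ref{lem:lower-1}: for any root $r$ take the bottom path $Q=b_1b_2\cdots b_k$ of the copy of $\CGraph{k}$ not containing $r$ (either copy if $r=a$); every shortest $r$--$b_i$ path passes through the apex $a$, and $\dist{a}{b_i}=k$ for all $i$ (each leg has length $k$ and neither the bottom edges nor the diagonals give a shortcut), so all vertices of $Q$ are equidistant from $r$, form an antichain of $\overrightarrow{G_r}$, and give $|\anticp{r}{Q}|\geq k$, whence $\ipac{G}\geq k$.

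The main obstacle is purely in the bookkeeping of the last two paragraphs rather than in any new idea. First, one must confirm that the neighbourhood of each bottom vertex is \emph{exactly} a $2K_2$, so that the interaction of the two diagonals incident to $b_i$ does not accidentally produce three pairwise non-adjacent neighbours. Second, one must check that the carefully routed cycle is genuinely \emph{isometric} and not merely induced: because of the top clique, a priori some pair of cycle vertices could be joined by a shorter path through $a$, and ruling this out (for the legs and bottom portions simultaneously) is the most delicate verification.
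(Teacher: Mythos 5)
Your proposal is correct and follows essentially the same route as the paper's proof: the paper likewise argues that no vertex of a copy of $\CGraph{k}$ has a closed neighbourhood inducing a $K_{1,3}$ (your claw-freeness case analysis, plus the observation that the apex is a cut vertex), invokes an isometric cycle of length at least $2k$ with Proposition~\ref{prp:isometric-cycle} for the hyperbolicity bound, and reuses the equidistant bottom-path antichain argument of Lemma~\ref{lem:lower-1} for $\ipac{G}\geq k$. You are in fact more careful than the paper on two points it glosses over — the explicit verification that the neighbourhood of each $b_i$ induces a $2K_2$, and the routing of the long cycle via a diagonal $b_{j-1}b'_j$ and a single clique edge to dodge the chords introduced in $\CGraph{k}$ — and the isometricity check you flag does go through by comparing each candidate shortcut (through $a$, the top clique, or an off-cycle leg) against the two arcs of the cycle.
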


 \begin{proof}
 Observe that $G$ has an isometric cycle of length at least $2k$. Therefore, due to Proposition~\ref{prp:isometric-cycle}, $G$ has hyperbolicity at least $\lceil\frac{k}{2}\rceil-1$. Let $H$ and $H'$ denote the two copies of $\BGraph{k}$ used to construct $G$. Observe that $H$ does not contain a vertex $v$ such that the vertices in $N[v]$ induce a $K_{1,3}$. Therefore, $H$ does not contain a theta or a pyramid.  Similarly, $H'$ does not contain a theta or a pyramid. Due to our construction, it follows that $G$ does not contain a theta or a pyramid. Moreover, for any induced cycle $C$ of $G$, and any vertex $w\notin C$, observe that $w$ has at most two neighbours in $C$. Therefore, $G$ is wheel-free. Using arguments similar to the ones used in the proof of Lemma~\ref{lem:lower-1}, we have that $\ipac{G} \geq k$.
 \end{proof}

 {
An isometric path cover $C$ of a graph $G$ is \emph{rooted} if there exists a vertex $v$ such that all paths in $C$ are $v$-rooted isometric paths.

\begin{lemma}\label{lem:lower-4}
For $k\geq 4$, let $G$ be the graph constructed by taking two distinct copies of $\DGraph{k}$ and identifying the two apex vertices. Then $G$ is a (prism, pyramid, wheel)-free planar graph such that any rooted isometric path cover of $G$ has cardinality at least $k^2$ but there is an isometric path cover of $G$ of cardinality $3k+1$.
\end{lemma}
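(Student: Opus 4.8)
The plan is to verify the four assertions in turn, treating the structural properties first and then the two cardinality bounds. Write $G$ as the union of two copies $W,W'$ of $\DGraph{k}$ glued at the apex $a$; in each copy denote the $k+1$ vertical paths by $P_1,\dots,P_{k+1}$ (with $P_i$ joining $a$ to $b_i$ and having length $k$) and, for each $i\in[k]$, denote by $C_{i,1},\dots,C_{i,k}$ the $k$ degree-two vertices inserted between $b_i$ and $b_{i+1}$ (the \emph{connectors}). For the structural part I would first observe that $G$ is triangle-free: every connector is adjacent only to the two \emph{non-adjacent} vertices $b_i,b_{i+1}$, the apex neighbours $a'_i$ are pairwise non-adjacent, and the two copies meet only at $a$; since both a prism and a pyramid contain a triangle, $G$ is (prism, pyramid)-free. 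For wheel-freeness I would note that the only vertices of degree at least $3$ are $a$ and the $b_i$'s, and that every neighbour of such a vertex $w$ is a degree-two vertex having $w$ as one of its two neighbours; hence if $w\notin V(C)$ for an induced cycle $C$, no neighbour of $w$ can lie on $C$ (it would have degree at most $1$ in $C$), so $w$ has zero neighbours on $C$ and cannot be a hub. Planarity follows from the explicit embedding suggested by Figure~\ref{fig:lower}(d): draw each copy as a plane tree rooted at $a$ and realise the $k$ connectors of a bundle as nested arcs between $b_i$ and $b_{i+1}$.

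For the lower bound on rooted covers, the key metric observation is that all $k^2$ connectors of one copy lie at a \emph{single} BFS layer from any root. Fix a root $v$ and let $W$ be a copy of $\DGraph{k}$ not containing $v$ (if $v=a$, either copy works). Since $a$ is a cut vertex, every path from $v$ into $W$ passes through $a$, and $\dist{a}{C_{i,\ell}}=k+1$ for every connector of $W$; hence $\dist{v}{C_{i,\ell}}=\dist{v}{a}+k+1$ takes the same value for all $i\in[k],\ \ell\in[k]$. In $\overrightarrow{G_v}$ every arc strictly changes the distance to $v$, so two vertices equidistant from $v$ are incomparable; therefore the $k^2$ connectors of $W$ form an antichain and $|\anticp{v}{\cdot}|\ge k^2$. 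As observed in the proof of Lemma~\ref{lem:ipac-ipco}, two antichain vertices cannot be covered by a single $v$-rooted isometric path, so any $v$-rooted cover uses at least $k^2$ paths. As $v$ was arbitrary, every rooted isometric path cover of $G$ has cardinality at least $k^2$.

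For the upper bound I would exhibit $3k+1$ explicit isometric paths. First, pair the vertical paths across the two copies: for each $i\in[k+1]$ the concatenation $P_i\cup P'_i$ through $a$ is a path of length $2k$ between $b_i$ and its twin in $W'$, and since any such route must pass through $a$ this equals the distance, so it is isometric; these $k+1$ paths cover $a$ and every vertex of every vertical path (in particular every $b_i$). Second, within each copy use the $k$ zig-zag paths $R_\ell=b_1\,C_{1,\ell}\,b_2\,C_{2,\ell}\cdots C_{k,\ell}\,b_{k+1}$ for $\ell\in[k]$; each has length $2k$, and since $\dist{b_1}{b_{k+1}}=2k$ (the bottom route and the apex route both have length $2k$), each $R_\ell$ is isometric. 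The $2k$ paths $\{R_\ell\}$ over the two copies cover all $2k^2$ connectors. Altogether $(k+1)+2k=3k+1$ isometric paths cover $V(G)$.

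The main obstacle is the metric bookkeeping underlying both cardinality bounds: for the lower bound one must be sure that \emph{every} root forces an equidistant set of $k^2$ connectors, which relies on the far copy being reachable only through the cut vertex $a$; for the upper bound one must check that the zig-zag paths are genuinely isometric, i.e.\ that the apex detour never shortcuts the bottom distances $\dist{b_i}{b_j}=2|i-j|$. Once these distance computations are in place the counting is immediate, and the comparison of the bounds $3k+1$ versus $k^2$ is exactly what yields the $\Omega(\sqrt{n})$ separation claimed in Theorem~\ref{thm:lower}\ref{it:d}.
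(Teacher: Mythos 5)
Your proof is correct and takes essentially the same approach as the paper's: the lower bound comes from the $k^2$ degree-two connector vertices of the far copy being equidistant from any root (since the apex $a$ is a cut vertex, each $v$-rooted isometric path covers at most one of them), and the upper bound from $k+1$ geodesics through $a$ covering the vertical paths plus $2k$ zig-zag geodesics of length $2k$ covering the connectors. Your write-up simply fills in the structural and metric details (triangle-freeness, the degree-two-neighbour argument for wheel-freeness, and the verification that the zig-zag paths are isometric) that the paper compresses into ``the construction ensures'' and ``it is easy to verify.''
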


\begin{proof}
      The construction ensures that $G$ is a (prism, pyramid, wheel)-free planar graph. Let $H$ and $H'$ denote the two copies of $\DGraph{k}$ used to construct $G$ and $a$ denote the apex vertex. Observe that there are $k^2$ vertices at maximum distance from the apex vertex $a$ in $H$ and a $a$-rooted isometric path can only cover one of them. Therefore, at least $k^2$ many $a$-rooted isometric paths are needed to cover the graph $H$. As $H'$ is isomorphic to $H$, it has the above properties. Since $a$ is a cut-vertex in $G$, it is easy to verify that for any vertex $v\in V(G)$, any $v$-rooted isometric path cover of $G$ requires $k^2$ many paths. On the other hand, it is easy to check that $G$ has an isometric path cover of cardinality $3k+1$. Indeed $k+1$ geodesics are sufficient to cover the vertices of the maximal isometric paths containing $a$, $2k$ geodesics are sufficient to cover the remaining vertices of $G$.
\end{proof}

}

Lemma~\ref{lem:ipac-ipco}, \ref{lem:lower-1}, \ref{lem:lower-2}, \ref{lem:lower-3}, \ref{lem:lower-4} imply Theorem~\ref{thm:lower}.






\vspace{-5pt}

\section{Conclusion}\label{sec:conclu}

In this paper, we have introduced the new graph parameter \emph{isometric path complexity}. We have shown that the isometric path complexity of a graph with $n$ vertices and $m$ edges can be computed in $O(n^2 m)$-time. It would be interesting to provide a faster algorithm to compute the isometric path complexity of a graph. Note that, no non-trivial lower bound on the achievable running time is known.

We have derived upper bounds on the isometric path complexity of three seemingly (structurally) different classes of graphs, namely hyperbolic graphs, (theta, pyramid, prism)-free graphs and outerstring graphs. An interesting direction of research is to generalise the properties of hyperbolic graphs or (theta, pyramid, prism)-free graphs to graphs with bounded isometric path complexity.

Note that, in our proofs we essentially show that, for any graph $G$ that belongs to one of the above graph classes, any vertex $v$ of $G$, and any isometric path $P$ of $G$, the path $P$ can be covered by a small number of $v$-rooted isometric paths. This implies our ``choice of the root'' is arbitrary. This motivates the following definition. The \emph{strong isometric path complexity} of a graph $G$
is the minimum integer $k$ such that for each vertex $v\in V(G)$ we have that the vertices of any isometric path $P$ of $G$ can be covered by $k$ many $v$-rooted isometric paths. Our proofs imply that the strong isometric path complexity of graphs from all the graph classes addressed in this paper are bounded. 
We also wonder whether one can find other interesting graph classes with small (strong) isometric path complexity. 

Our results imply a constant-factor approximation algorithm for \IPC on hyperbolic graphs, (theta, pyramid, prism)-free graphs and outerstring graphs. However, the existence of a constant-factor approximation algorithm for \IPC on general graphs is not known (an $O(\log n)$-factor approximation algorithm is designed in~\cite{TG21}).

\bibliographystyle{plain}
\bibliography{references}

\end{document}